\documentclass[12pt]{elsarticle}

\usepackage[left=1in, right=1in, top=1in, bottom=1in]
{geometry}
\usepackage{amsfonts,amssymb,graphicx,amsmath,amsthm, float}

\theoremstyle{plain}
\newtheorem{theorem}{Theorem}
\newtheorem{lemma}[theorem]{Lemma}
\newtheorem{prop}[theorem]{Proposition}
\newtheorem{rmk}[theorem]{Remark}
\newtheorem{definition}[theorem]{Definition}

\theoremstyle{remark}

\newcommand{\Z}{\mathbb{Z}}
\newcommand{\rn}{$rn$}

\def \diam {\text{diam}}

\begin{document}

\begin{frontmatter}

\title{The Radio numbers of all graphs of order $n$ and diameter $n-2$}
        \date{\today}

\author{Katherine Benson}
\address{The University of Iowa}
\address{katherine-f-benson@uiowa.edu}
\author{Matthew Porter}
\address{University of California, Santa Barbara}
\address{mattporter@math.ucsb.edu}
\author{Maggy Tomova (corresponding author)}
\address{The University of Iowa}
\address{maggy-tomova@uiowa.edu}
\address{319-335-0761}


\begin{abstract}
A radio labeling of a connected graph $G$ is a function $c:V(G) \to  \mathbb Z_+$ such that for every two distinct vertices $u$ and $v$ of $G$
$$\text{distance}(u,v)+|c(u)-c(v)|\geq 1+ \text{diameter}(G).$$
The radio number of a graph $G$ is the smallest integer $M$ for which there exists a labeling $c$ with $c(v)\leq M$ for all $v\in V(G)$.
The radio number of graphs of order $n$ and diameter $n-1$, i.e., paths, was determined in \cite{paths}. Here we determine the radio numbers of all graphs of order $n$ and diameter $n-2$.

\vspace{.1in}\noindent \textbf{2000 AMS Subject Classification:} 05C78 (05C15, 05C38)
\end{abstract}

\begin{keyword}  radio number, radio labeling
\end{keyword}

\end{frontmatter}

\section{Introduction}

The general problem that inspired radio labeling is what has been known as the channel assignment problem:  the goal is to assign radio channels in a way so as to avoid interference between radio transmitters that are geographically close. The problem was first put into a graph theoretic context by Hale \cite{Georges}, \cite{Hale}.  The approach is to model the location of the transmitters via a graph with the transmitters corresponding to the vertices of the graph. Labels corresponding to the frequencies are assigned to the vertices so that vertices that are close to each other in the graph receive labels with large absolute difference. Exactly how large this absolute difference is varies in different models giving rise to several different kinds of labelings that are collectively known as $k$-radio labelings.

  Chartrand and Zhang were first to define $k$-radio labeling of graphs in \cite{Chartrand}.  Let $G$ be a connected graph and let $d(u,v)$ denote the distance between two distinct vertices $u$ and $v$ of $G$.  Let $\diam(G)$ be the diameter of $G$.  The $k$-radio labeling condition is that given $k$, with $1\leq k \leq diam(G)$, and $c$ a labeling, then $d(u,v) + |c(u)-c(v)| \geq 1 + k$ for all distinct vertices $u,v$ in $G$.  With this definition, one tries to minimize the largest value used as a label.
  
  It is important to note that $k$-radio labeling is actually a generalization of the classical idea of vertex coloring. Vertex coloring corresponds to $k$-radio labeling with $k=1$.

2-radio labeling is also known as $L(2,1)$ labeling. This is a labeling where adjacent vertices have labels with absolute difference at least $2$ and vertices distance 2 apart have labels with absolute difference at least $1$. This type of labeling was first studied by Griggs and Yeh in \cite{Griggs}. Since then a large body of literature has been compiled including the survey paper, \cite{Yeh}.

Another important specific $k$-radio labeling is when $k= diam(G)$. This is one of the most widely studied types of $k$-radio labelings and is known simply as a radio labeling, or multilevel distance labeling. This is the labeling that this paper will focus on so below are the relevant definitions.  


A \emph{radio labeling} $c$ of $G$ is an assignment of positive integers\footnote[1]{Some authors allow $0$ as a label. In this paper we do not allow $0$ to be a label and adjust all relevant formulas we cite accordingly} to the vertices of $G$ such that for every two distinct vertices $u$ and $v$ of $G$ the inequality
 $d(u,v)+|c(u)-c(v)|\geq \diam(G) + 1$, called the \emph{radio condition}, is satisfied. The maximum integer in the range of the labeling is its \emph{span}. The \emph{radio number} of $G$, $rn(G)$, is the minimum possible span over all radio labelings of $G$.

In \cite{paths} Liu and Zhu determine the radio number of graphs with $n$ vertices and diameter $n-1$, i.e., paths.  In this paper we determine the radio number of all graphs with $n$ vertices and diameter $n-2$.

Much of this paper will be devoted to studying a family of graphs which we call spire graphs. 

\begin{definition}Let $n,s \in \mathbb{Z}$ where  $n \geq 4$ and $2 \leq s \leq n-2$. 
The spire graph $S_{n,s}$ is the graph with vertices $\{v_1,...,v_n\}$, and edges $\{v_i,v_{i+1} | i=1,2,...,n-2\}$ together with the edge  $\{v_s,v_n\}$. The vertex $v_n$ is called {\em the spire}. Without loss of generality we will always assume that $s \leq \lfloor\frac{n}{2}\rfloor$. See Figure \ref{fig:SpireGraph}.
\end{definition}

\begin{figure}[h]
\begin{center}
\includegraphics[scale=.5]{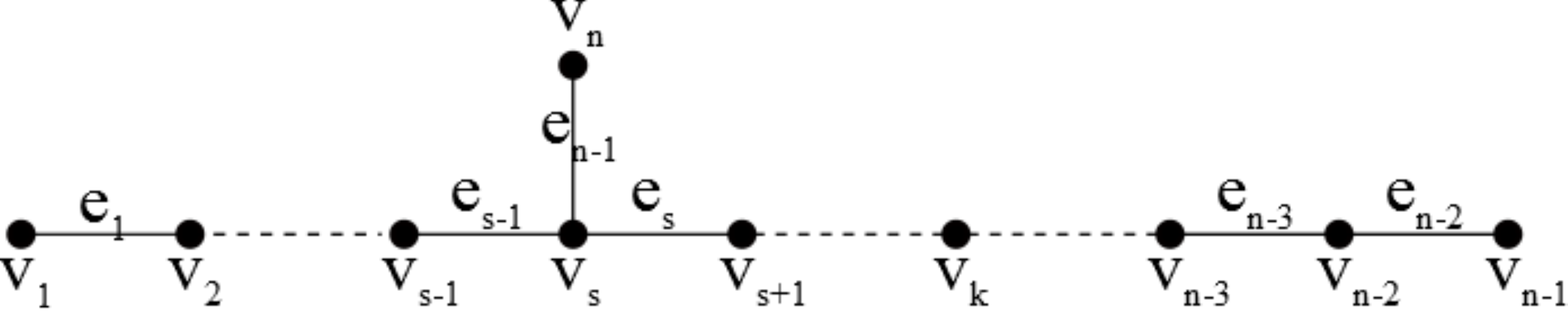}
\caption{$S_{n,s}$}
\label{fig:SpireGraph}
\end{center}
\end{figure}

We will show that:
 
 \medskip
{\bf Theorem (Radio Number of $S_{n,s}$}) 
{\em Let $S_{n,s}$ be a spire graph, where $2 \leq s \leq \lfloor\frac{n}{2}\rfloor$.  Then, }
\begin{center}

$$rn(S_{n,s}) =   \begin{cases}

 2k^2 -4k + 2s +3 & \text{ if } n = 2k \text{ and }2 \leq s \leq k-2,\\
2k^2 -2k   & \text{if } n = 2k \text{ and }s= k-1,\\
2k^2 -2k + 1  & \text{if } n = 2k \text{ and }s= k,\\
2k^2 -2k + 2s & \text{if } n = 2k + 1.

\end{cases}
$$
\end{center}

\medskip

Based on this result in Section \ref{sec:others} we will also determine the radio numbers of all other graphs with $n$ vertices and diameter $n-2$.

In her paper \cite{trees} Liu establishes bounds for the radio numbers of trees. In particular she determines the exact radio numbers of spire graphs with an odd number of vertices and of spire graphs when the spire is very close to the middle of the path. Although our techniques easily cover these cases as well, in the interest of brevity we will quote Liu's results whenever feasible.

The paper is structured as follows: in Section \ref{sec:upper}, we find an upper bound for the radio number of spire graphs by presenting algorithms for labeling them. Even though in some cases such upper bounds have been established in \cite{trees}, we nevertheless present our algorithms for all cases as these algorithms will be used again in Section \ref{sec:others}. In Section \ref{sec:techniques}, we establish techniques we will use in Section \ref{sec:lower} to find a lower bound for the radio number of spire graphs. When $n$ is odd, the lower bound follows directly from \cite{trees} so we simply quote the result. The same is true when $n$ is even and the spire is very close to the middle of the path. However, when $n$ is even and the spire is not near the middle of the path we introduce a number of new techniques that are likely to be applicable to other graphs that have large diameter. It is worth noting that these techniques can be used to easily reprove the lower bound for paths (both odd and even) found in \cite{paths}. In Section \ref{sec:others}, we analyze all remaining order $n$ graphs with diameter $n-2$. In most cases their radio number can be found by thinking of them as a spire graph with some additional edges. This section also has a number of lemmas that should be useful in other contexts.

\section{Radio number of spire graphs--upper bound}\label{sec:upper}

\begin{theorem}[{\bf Upper bound for $S_{n,s}$}]  \label{thm:upper}
Let $S_{n,s}$ be a spire graph, where $2 \leq s \leq \lfloor\frac{n}{2}\rfloor$.  Then, 
\begin{center}

$$rn(S_{n,s}) \leq   \begin{cases}

 2k^2 -4k + 2s +3 & \text{ if } n = 2k \text{ and }2 \leq s \leq k-2,\\
2k^2 -2k   & \text{if } n = 2k \text{ and }s= k-1,\\
2k^2 -2k + 1  & \text{if } n = 2k \text{ and }s= k,\\
2k^2 -2k + 2s & \text{if } n = 2k + 1.

\end{cases}
$$
\end{center}
\end{theorem}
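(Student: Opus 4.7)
The plan is to establish the upper bound by constructing, for each of the four parameter ranges, an explicit radio labeling whose span equals the claimed value. Given any ordering $v_{\pi(1)}, v_{\pi(2)}, \ldots, v_{\pi(n)}$ of the vertices of $S_{n,s}$, I would set $c(v_{\pi(1)}) = 1$ and then define inductively
$$c(v_{\pi(i+1)}) = c(v_{\pi(i)}) + (n-1) - d(v_{\pi(i)}, v_{\pi(i+1)}),$$
which makes the radio condition tight on consecutive pairs in $\pi$. The resulting span is $1 + \sum_{i=1}^{n-1}\bigl[(n-1) - d(v_{\pi(i)}, v_{\pi(i+1)})\bigr]$, so minimizing the span is equivalent to choosing $\pi$ in such a way that vertices consecutive in $\pi$ are as far apart as possible in $G$. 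This pushes $\pi$ toward a zigzag between the two halves of the main path $v_1, \ldots, v_{n-1}$, and the task in each case reduces to splicing the spire $v_n$ into this zigzag in the correct slot.

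Next I would write the zigzag down explicitly in each case. In Case 4 ($n = 2k+1$) the main path has even length $2k-1$, so a Liu--Zhu-style ordering on it serves as the skeleton; inserting $v_n$ into the slot adjacent to $v_s$ contributes exactly $2s$ extra units to the total gap, yielding $2k^2 - 2k + 2s$. In Case 1 ($n = 2k$, $s \leq k-2$) the main path has odd length, forcing one unavoidable short gap in any zigzag, and placing $v_n$ so as to partly absorb this irregularity produces the $2s + 3$ correction term. In Cases 2 and 3 the spire sits essentially at the center of the path and behaves like an interior path vertex, so the ordering degenerates to a pure path-style labeling and the formula collapses to the cleaner values $2k^2 - 2k$ and $2k^2 - 2k + 1$. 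For each case I would exhibit the explicit permutation, tabulate the consecutive distances $d(v_{\pi(i)}, v_{\pi(i+1)})$, and verify the sum matches the claimed span.

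The hard part will be verifying the radio condition on \emph{non-consecutive} pairs, since the construction only forces it on neighbors in $\pi$. For $j > i$ we have the telescoped bound
$$c(v_{\pi(j)}) - c(v_{\pi(i)}) \;\geq\; \sum_{t=i}^{j-1}\bigl[(n-1) - d(v_{\pi(t)}, v_{\pi(t+1)})\bigr],$$
and this combined with $d(v_{\pi(i)}, v_{\pi(j)})$ must reach $n-1$. When $j - i \geq 3$ the accumulated gap already exceeds $n-1$ by a comfortable margin because each step contributes roughly $k-1$. The real work is the case $j = i + 2$, which reduces to a single triangle-inequality-style check comparing $d(v_{\pi(i)}, v_{\pi(i+2)})$ with the sum of the two step distances $d(v_{\pi(i)}, v_{\pi(i+1)}) + d(v_{\pi(i+1)}, v_{\pi(i+2)})$. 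I would discharge this as a short, case-by-case check across the explicit ordering, paying particular attention to triples containing the spire $v_n$, whose distance to any path vertex $v_i$ is $|s - i| + 1$ rather than the naive path value. I expect the heaviest bookkeeping in Case 1, where the off-center spire produces the most irregular gap pattern.
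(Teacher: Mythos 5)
Your overall strategy --- fix an explicit zigzag ordering of the vertices, define the labels by prescribing the consecutive differences, and verify the radio condition separately for $j=i+1$, $j=i+2$, and $j\geq i+3$, with the $j=i+2$ triples (especially those involving the spire) being the only delicate ones --- is exactly the strategy of the paper's proof, and your description of the orderings (a Liu--Zhu-style alternation between the two halves of the path with $v_n$ spliced in near $v_s$) matches the paper's Tables 1 and 3--5 in spirit.

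There is, however, one concrete flaw in your plan for Case 1 ($n=2k$, $2\le s\le k-2$). You propose to set $c(v_{\pi(i+1)})-c(v_{\pi(i)})=(n-1)-d(v_{\pi(i)},v_{\pi(i+1)})$, making every consecutive pair tight, and simultaneously to choose $\pi$ so that consecutive vertices are as far apart as possible. These two choices cannot coexist here: for a distance-maximizing ordering one has $\sum_i d(v_{\pi(i)},v_{\pi(i+1)})=2k^2-2s+1$ (computed in the paper's Section 4), so the all-tight labeling would have span $1+(2k-1)^2-(2k^2-2s+1)=2k^2-4k+2s+1$, strictly below the radio number $2k^2-4k+2s+3$ established by the matching lower bound. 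Hence that function cannot be a radio labeling --- some $j=i+2$ check must fail --- and your verification step would break rather than merely require bookkeeping. The repair is to abandon tightness on two consecutive pairs: the paper keeps a distance-maximizing ordering but inserts slack $1$ into exactly two rows of Table 1 (the steps $v_{2k-2}\to v_2$ and $v_1\to v_{2k-1}$, where $d+\Delta c=2k$ rather than $2k-1$), which is precisely the source of the ``$+3$.'' (Alternatively one could seek an all-tight labeling over an ordering with total distance $2k^2-2s-1$, but then that ordering must be exhibited and re-verified, which your proposal does not do.) The same tension, with total slack $1$, arises for $s=k-1$; your treatment of the remaining cases ($s=k$ and $n$ odd), where the paper's labelings are indeed tight on every consecutive pair, is accurate.
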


\begin{proof}
To establish this bound we define a labeling with the appropriate span. The cases for $n$ even and $n$ odd are discussed separately.\\

{\bf Case 1:} First consider the case when $n = 2k$ for some $k\in\Z$.

{\em Subcase 1A:} $2 \leq s \leq k-2$ and $k \geq 7$.  Order the vertices of $S_{n,s}$ into three groups as follows:
\begin{center}
Group I: $v_k, v_{2k}, v_{k+4}, v_5, v_{k+3}, v_3, v_{k+2}, v_4,$\\
Group II: $v_{k+5}, v_6, v_{k+6}, v_7, \ldots, v_{k+m}, v_{m+1}, \ldots, v_{k + (k-3)}, v_{k-2},$\\
Group III: $v_{2k-2}, v_2, v_{k+1}, v_1, v_{2k-1}, v_{k-1}$.
\end{center}

In this ordering Group I always contains the same 8 vertices and Group III always contains the same 6 vertices. Group II follows the indicated pattern and contains $n-14$ vertices. 

Now, rename the vertices of $S_{n,s}$ in the above ordering by $x_1, x_2, \ldots, x_n$ where $x_1=v_k$, $x_2=v_{2k}$, etc. In Table \ref{Table 1} we define a labeling $c$ of $S_{n,s}$. We will let $c(x_1)=1$. The first column in the table gives the order in which the vertices are labeled, i.e., the inequality $c(x_i) > c(x_{i-1})$ always holds. The second column reminds the reader which vertex we are labeling. In the third column we have computed the distance between $x_i$ and $x_{i+1}$. Finally in the last column we give the difference between the labels $c(x_i)$ and $c(x_{i+1})$.  Given that $c(x_1)=1$, one can use the last column to compute $c(x_i)$ by summing the first $i-1$ entries of the column and then adding one to this sum.\\

{\bf Claim:} The function $c$ defined in Table \ref{Table 1} is a radio labeling on $S_{n,s}$.\\

\begin{table}

\centering
\begin{tabular}{|c|c|c|c|}
\hline
$x_i$ & Vertex Names & $d(x_i, x_{i+1})$ & $c(x_{i+1})-c(x_i)$\\
\hline
$x_1$ & $v_k$ & $k-s +1$ & $k+s-2$ \\
$x_2$ & $v_{2k}$ & $k-s+5$ & $k+s-6$\\
$x_3$ & $v_{k+4}$ & $k-1$ & $k$\\
$x_4$ & $v_5$ & $k-2$ & $k+1$\\
$x_5$ & $v_{k+3}$ & $k$ & $k-1$\\
$x_6$ & $v_3$ & $k-1$ & $k$\\
$x_7$ & $v_{k+2}$ & $k-2$ & $k+1$\\
$x_8$ & $v_4$ & $k+1$ & $k-2$\\
\hline
$x_9$ & $v_{k+5}$ & $k-1$ & $k$\\
$x_{10}$ & $v_6$ & $k$ & $k-1$\\
$\vdots$ & $\vdots$& $\vdots$ & $\vdots$\\
$x_{2m-1}$ & $v_{k+m}$ & $k-1$ & $k$\\
$x_{2m}$ & $v_{m+ 1}$ & $k$ & $k-1$\\
$\vdots$ & $\vdots$& $\vdots$ & $\vdots$\\
$x_{n-7}$ & $v_{k + (k-3)}$ & $k-1$ & $k$\\
$x_{n-6}$ & $v_{k-2}$ & $k$ & $k-1$\\
\hline
$x_{n-5}$ & $v_{2k-2}$ & $2k - 4$ & $4$\\
$x_{n-4}$ & $v_2$ & $k-1$ & $k$\\
$x_{n-3}$ & $v_{k+1}$ & $k$ & $k-1$\\
$x_{n-2}$ & $v_1$ & $2k-2$ & $2$\\
$x_{n-1}$ & $v_{2k-1}$ & $k$ & $k-1$\\
$x_n$ & $v_{k-1}$ & n/a & n/a\\
\hline
\end{tabular}
\caption{Hi}
\label{Table 1}
\end{table}
{\em Proof of claim:} To prove that $c$ is a radio labeling, we need to verify that the radio condition holds for all vertices $x_i, x_j \in V(S_{n,s})$.  In this case, the diameter of $S_{n,s}$ is $2k-2$ so we must show that for every $i,j$ with $j>i$, $d(x_i, x_j)+c(x_j)-c(x_i) \geq 2k-1$.\\

\indent {\em Case A:} $j=i+1$.  To verify the radio condition it suffices to add the entries in the $3^{rd}$ and $4^{th}$ columns of the $i^{th}$ row of Table \ref{Table 1} and check that this sum is always at least $2k-1$.
 
{\em Case B:} $j=i+2$.  Note that $c(x_j) - c(x_{i})$
is equal to the sum of the entries in the last column of rows $i$ and $i+1$ of Table \ref{Table 1}. One can quickly check that in most cases $c(x_j) - c(x_{i}) \geq 2k-2$ and therefore $d(x_i, x_j) + c(x_j) - c(x_{i}) \geq 1+ 2k-2$. 
It is less clear that the inequality $d(x_i, x_j) + c(x_j) - c(x_{i}) \geq 2k-1$ holds for the following six pairs of vertices: $\{x_3, x_1\}, \{x_4, x_2\},  \{x_{n-4}, x_{n-6}\},  \{x_{n-3}, x_{n-5}\},  \{x_{n-1}, x_{n-3}\}$, and $\{x_n, x_{n-2}\}$. In Table \ref{Table 2} we compute the distance between vertices and the difference between their labels for five of those vertex pairs. The reader can easily verify that these pairs satisfy the radio condition.
\begin{table}
\centering
\begin{tabular}{|c|c|c|}
\hline
Vertex pair & $d(x_{i}, x_{i+2})$ & $c(x_{i+2})-c(x_i)$\\
\hline
$\{x_3, x_1\}$ &  $4$ & $2k +2s -8$\\
$\{x_{n-4}, x_{n-6}\}$ &  $k-4$ & $k+3$\\
$\{x_{n-3}, x_{n-5}\}$ & $k-3$ & $k+4$\\
$\{x_{n-1}, x_{n-3}\}$  & $k-2$ & $k+1$\\
$\{x_n, x_{n-2}\}$ &  $k-2$ & $k+1$\\
\hline
\end{tabular}
\caption{}
\label{Table 2}
\end{table}
\\\indent For the pair $\{x_4, x_2\}$, note that the vertex incident to the spire is $v_{s}$.  We consider two cases:\\ 
\indent {\em(1)} If  $s < 5$, then $d(x_2, x_4) \,+\, c(x_4) \,-\, c(x_2) \,=\, d(v_n, v_5) \,+ \,2k \,+ \,s\,-\, 6 \,=\, 5 \,-\, s\,+\, 1 \,+\, 2k\,+\, s\,-\, 6 \,=\, 2k$.\\
\indent {\em(2)} If $s \geq 5$ then $c(x_4)\,-\,c(x_2) \,=\, 2k \,+\, s \,-\,6 \,\geq \,2k \,+\, 5\,-\, 6 \,=\, 2k \,-\, 1$.  

In both cases the radio condition is satisfied. 

 {\em Case C:} $j\geq i+3$.  Note that $c(x_j) - c(x_{i})$ is at least equal to the sum of the entries in the last column of rows $i$, $i+1$ and $i+2$ in Table \ref{Table 1}. As the sum of any three consecutive entries in the column is at least $2k-2$, in this case the radio condition is always satisfied.

\qed (Claim)

Letting $c(x_1)=1$, the largest number in the range of the radio labeling $c$ is  $c(x_n)$ and is therefore equal to the sum of the entries in the last column of Table \ref{Table 1} plus one.  Since the sums of Group I, Group II, and Group III are $8k+2s-9$, $(k-7)(2k-1)$, and $3k+4$, respectively, we conclude that $\rn(S_{n,s}) \leq 2k^2-4k+2s+3$ as desired.\\

{\em Subcase 1B:} $s=k-1$ and $k\geq 3$. As this algorithm is similar to the previous one but simpler, we summarize the algorithm directly in Table \ref{Table 3}.

\begin{table}
\centering
\begin{tabular}{|c|c|c|c|}
\hline
$x_i$ & Vertex Names & $d(x_i, x_{i+1})$ & $c(x_{i+1})-c(x_i)$\\
\hline
$x_1$ & $v_{k-1}$ & $k$ & $k-1$ \\
$x_2$ & $v_{2k-1}$ & $k+1$ & $k-1$\\
$x_3$ & $v_{2k}$ & $k$ & $k-1$\\
\hline
$x_4$ & $v_{2k-2}$ & $k$ & $k-1$\\
$x_5$ & $v_{k-2}$ & $k-1$ & $k$\\
$\vdots$ & $\vdots$& $\vdots$ & $\vdots$\\
$x_{2m}$ & $v_{2k-m}$ & $k$ & $k-1$\\
$x_{2m+1}$ & $v_{k-m}$ & $k-1$ & $k$\\
$\vdots$ & $\vdots$& $\vdots$ & $\vdots$\\
$x_{n-2}$ & $v_{2k-(k-1)}$ & $k$ & $k-1$\\
$x_{n-1}$ & $v_{k-(k-1)}$ & $k-1$ & $k$\\
\hline
$x_{n}$ & ${v_{k}}$ & n/a & n/a \\
\hline
\end{tabular}
\caption{}
\label{Table 3}
\end{table}
By adding the third and fourth entries in each row of Table \ref{Table 3}, we can verify that $d(x_i, x_{i+1})+c(x_{i+1})-c(x_i)\geq 2k-1$ for all $i$. In this case it is also easy to check that $c(x_{i+j})-c(x_{i})$ is at least $2k-2$ for all $i$ and all $j \geq 2$ so the radio condition is always satisfied. Adding one to the sum of the values in the last column of Table \ref{Table 3} gives the desired upper bound for the radio number in this case.\\
 
{\em Subcase 1C:} $s=k$ and $k\geq 2$.

Table \ref{Table 4} corresponds to the labeling algorithm. As in Subcase 1B checking that $c$ is a radio labeling is trivial. Again the sum of the values in the last column plus one gives the desired upper bound for the radio number.\\\\
\begin{table}
\centering
\begin{tabular}{|c|c|c|c|}
\hline
$x_i$ & Vertex Names & $d(x_i, x_{i+1})$ & $c(x_{i+1})-c(x_i)$\\
\hline
$x_1$ & $v_{k}$ & $k-1$ & $k$ \\
$x_2$ & $v_{1}$ & $k$ & $k-1$\\
$x_3$ & $v_{k+1}$ & $k-1$ & $k$\\
$\vdots$ & $\vdots$& $\vdots$ & $\vdots$\\
$x_{2m}$ & $v_{m}$ & $k$ & $k-1$\\
$x_{2m+1}$ & $v_{k+m}$ & $k-1$ & $k$\\
$\vdots$ & $\vdots$ & $\vdots$ & $\vdots$\\
$x_{n-2}$ & $v_{k-1}$ & $k$ & $k-1$\\
\hline
$x_{n-1}$ & $v_{2k-1}$ & $k$ & $k-1$\\
$x_{n}$ & ${v_{2k}}$ & n/a &  n/a \\
\hline
\end{tabular}
\caption{}
\label{Table 4}
\end{table}
{\bf Case 2:} Now suppose that $n = 2k+1$ for some $k\in\Z$. Order the vertices of $S_{n,s}$ as follows:
\begin{center}
Group I: $v_{k-1}, v_{2k-1}, v_{k-2}, v_{2k-2}, v_{k-3}, v_{2k-3}, \ldots, v_{k+3}, v_2, v_{k+2}$, \\
Group II: $v_{2k+1}, v_{k+1}, v_1, v_{2k}, v_k$.
\end{center} 

In this ordering Group I always contains $n-5$ vertices and Group II always contains the same 5 vertices. 
Now, rename the vertices of $S_{n,s}$ in the above ordering by $x_1, x_2, \ldots, x_n$.  This is the label order of the vertices of $S_{n,s}$.\\

\begin{table}
\centering
\begin{tabular}{|c|c|c|c|}
\hline
$x_i$ & Vertex Names & $d(x_i, x_{i+1})$ & $c(x_{i+1})-c(x_i)$\\
\hline
$x_1$ & $v_{k -1}$ & $k$ & $k$\\
$x_2$ & $v_{2k-1}$ & $k+1$ & $k-1$\\
$x_3$ & $v_{k-2}$ & $k$ & $k$\\
$x_4$ & $v_{2k-2}$ & $k+1$ & $k-1$\\
$x_5$ & $v_{k-3}$ & $k$ & $k$\\
$x_6$ & $v_{2k-3}$ &$k+1$ & $k-1$\\
$\vdots$ & $\vdots$ & $\vdots$ & $\vdots$\\
$x_{n-8}$ & $v_3$ & $k$ & $k$\\
$x_{n-7}$ & $v_{k+3}$ & $k+1$ & $k-1$\\
$x_{n-6}$ & $v_2$ & $k$ & $k$\\
$x_{n-5}$ & $v_{k+2}$ & $k+3- s$ & $k-3 + s$\\
\hline
$x_{n-4}$ & $v_{2k+1}$ & $k+2 - s$ & $k-2 + s$\\
$x_{n-3}$ & $v_{k+1}$ & $k$ & $k$\\
$x_{n-2}$ & $v_1$ & $2k-1$ & $1$\\
$x_{n-1}$ & $v_{2k}$ & $k$ & $k$\\
$x_n$ & $v_{k}$ & n/a & n/a\\
\hline
\end{tabular}
\caption{}
\label{Table 5}
\end{table}

{\bf Claim:} The function $c$ defined in Table \ref{Table 5} is a radio labeling on $S_{n,s}$.

\vspace{.3in}

{\em Proof of claim:} To prove that $c$ is a radio labeling, we need to verify that the radio condition holds for all vertices $x_i, x_j \in S_{n,s}$, i.e.,  we must show that for every $i,j$ with $j>i$, $d(x_i, x_j)+c(x_j)-c(x_i) \geq 2k$.

\indent {\em Case A:} $j=i+1$.  To verify the radio condition it suffices to add the entries in the $3^{rd}$ and $4^{th}$ column of the $i^{th}$ row of Table \ref{Table 5} and check that this sum is always at least $2k$.
 
 {\em Case B:} $j=i+2$.  Note that $c(x_j) - c(x_{i})$
is equal to the sum of the entries in the last column of rows $i$ and $i+1$ in Table \ref{Table 5}. One can quickly check that in most cases $c(x_j) - c(x_{i}) \geq 2k-1$ and therefore $d(x_i, x_j) + c(x_j) - c(x_{i}) \geq 1+ 2k-1=2k$.  It is less clear that $d(x_i, x_j) + c(x_j) - c(x_{i}) \geq 2k$ holds for the following five pairs of vertices $\{ u, v\}$: $\{x_{n-4}, x_{n-6}\}, \{x_{n-3}, x_{n-5}\},  \{x_{n-2}, x_{n-4}\},  \{x_{n-1}, x_{n-3}\}$, and $\{x_{n}, x_{n-2}\}$.  In Table \ref{Table 6} we compute the distance between vertices and difference between their labels for these vertex pairs.  The reader can verify that these pairs of vertices satisfy the radio condition keeping in mind that $s\geq 2$.

\begin{table}
\centering
\begin{tabular}{|c|c|c|}
\hline
Vertex pair & $d(x_{i}, x_{i+2})$ & $c(x_{i+2})-c(x_i)$\\
\hline
$\{x_{n-4}, x_{n-6}\}$ &  $s-1$ & $2k-3+s$\\
$\{x_{n-3}, x_{n-5}\}$ &  $1$ & $2k-5+2s$\\
$\{x_{n-2}, x_{n-4}\}$ & $s$ & $2k-2+s$\\
$\{x_{n-1}, x_{n-3}\}$  & $k-1$ & $k+1$\\
$\{x_n, x_{n-2}\}$ &  $k-1$ & $k+1$\\
\hline
\end{tabular}
\caption{}
\label{Table 6}
\end{table}

 {\em Case C:} $j\geq i+3$.  Note that $c(x_j) - c(x_{i})$ is at least equal to the sum of the entries in the last column of rows $i$, $i+1$ and $i+2$ in Table \ref{Table 5}. As the sum of any three consecutive entries in the column is at least $2k$, in this case the radio condition is always satisfied.

\qed (Claim)

The largest number in the range of the radio labeling $c$ is then $c(x_n)$ and is therefore equal to the sum of the entries in the last column of Table \ref{Table 5} plus one.  Since the sums of Group I and Group II are $(k-3)(2k-1) + 2k-3+s$ and $3k-1+s$, respectively, we conclude that $rn(G) \leq 2k^2 -2k +2s$ as desired.

\end{proof}

\section{Lower Bound Techniques} \label{sec:techniques}

In this section we develop some general techniques for determining a lower bound for the radio number of a graph. We will use these techniques to find a lower bound for the radio number of graphs with diameter $n-2$.

\begin{prop}\label{prop:untelescope}   Let $G$ be a connected graph with $n$ vertices and let $c$ be any radio labeling for $G$. Name the vertices of $G$ $\{x_1,...,x_n\}$ so that $c(x_i)<c(x_{i+1})$ for all $i$. For each $i$ let $j_i$ be a non-negative integer such that $d(x_i, x_{i+1})+c(x_{i+1})-c(x_i)=diam(G)+1+j_i$. Then 
\begin{center}
$c(x_n)=(n-1)(diam(G) + 1) + c(x_1) - \displaystyle \sum_{i=1}^{n-1} d(x_i,x_{i+1})+\displaystyle \sum_{i=1}^{n-1}j_i$.
\end{center}

\end{prop}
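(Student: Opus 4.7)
The plan is a one-line telescoping argument, so the proof is essentially bookkeeping. First I would solve the defining equation for $j_i$ for the consecutive label difference, namely
$$c(x_{i+1}) - c(x_i) \;=\; \diam(G) + 1 + j_i - d(x_i, x_{i+1}),$$
which holds for each $i \in \{1, \ldots, n-1\}$ by the hypothesis (and implicitly uses the radio condition to guarantee $j_i \geq 0$, though non-negativity is not needed for the identity itself).

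Next I would sum this identity over $i = 1, \ldots, n-1$. On the left-hand side the sum telescopes to $c(x_n) - c(x_1)$, because the vertices were indexed so that $c(x_i) < c(x_{i+1})$, making the sequence $c(x_1), c(x_2), \ldots, c(x_n)$ monotone and the intermediate cancellations valid. On the right-hand side, the constant term $\diam(G) + 1$ is summed $n-1$ times, giving $(n-1)(\diam(G)+1)$, while the remaining two sums are precisely $\sum_{i=1}^{n-1} j_i$ and $-\sum_{i=1}^{n-1} d(x_i, x_{i+1})$.

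Finally I would solve for $c(x_n)$ by adding $c(x_1)$ to both sides, yielding exactly the claimed formula. There is no real obstacle here: the only thing to be careful about is making explicit that the ordering hypothesis $c(x_i) < c(x_{i+1})$ is what legitimizes the telescoping and guarantees that the $x_i$ are distinct (so that $d(x_i, x_{i+1})$ and $j_i$ are well-defined via the radio condition). The proposition should be presented as a reformulation, not a deep result; its usefulness will come later in Section \ref{sec:lower}, where lower bounds on $rn(G)$ will be obtained by upper-bounding $\sum d(x_i, x_{i+1})$ and lower-bounding $\sum j_i$ over all possible orderings of the vertices.
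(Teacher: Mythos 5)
Your proposal is correct and matches the paper's proof exactly: both sum the $n-1$ defining equations, telescope the label differences to $c(x_n)-c(x_1)$, and rearrange. Nothing is missing.
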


\begin{proof}
The result is easily obtained by adding up the equations
$$d(x_1, x_2) + c(x_2) - c(x_1) = diam(G) + 1+j_1,$$
$$d(x_2, x_3) + c(x_3) - c(x_2) = diam(G) + 1+j_2,$$
$$\vdots$$
$$d(x_{n-1}, x_n) + c(x_n) - c(x_{n-1}) = diam(G) + 1+j_{n-1}.$$

\end{proof}

\begin{lemma}\label{lemma:distance} [Maximum distance lower bound for $rn(G)$]
Let $G$ be a connected graph with $n$ vertices. Then
$$rn(G) \geq (n-1)(diam(G) + 1) + 1 - \max_p \displaystyle \sum_{i=1}^{n-1} d(x_i,x_{i+1}),$$ where the maximum is taken over all possible bijections $p$ from the vertices of $G$ to the set $\{x_1,...,x_n\}$.
\end{lemma}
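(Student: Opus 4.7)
The plan is to derive the bound directly from Proposition \ref{prop:untelescope} by taking a hypothetical optimal radio labeling and minimizing each term on the right-hand side of the identity it provides.

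First I would fix an arbitrary radio labeling $c$ of $G$ and relabel the vertices $x_1,\ldots,x_n$ so that $c(x_1)<c(x_2)<\cdots<c(x_n)$; in particular the span of $c$ equals $c(x_n)$. Then, for each $i$, the radio condition forces $d(x_i,x_{i+1})+c(x_{i+1})-c(x_i)\geq \diam(G)+1$, so defining $j_i := d(x_i,x_{i+1})+c(x_{i+1})-c(x_i) - (\diam(G)+1)$ gives a non-negative integer. By Proposition \ref{prop:untelescope} we then have
$$c(x_n) = (n-1)(\diam(G)+1) + c(x_1) - \sum_{i=1}^{n-1} d(x_i,x_{i+1}) + \sum_{i=1}^{n-1} j_i.$$

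Next I would bound each term. Since $c$ maps into the positive integers, $c(x_1)\geq 1$. Since each $j_i\geq 0$, the last sum is $\geq 0$. Finally, the specific ordering $x_1,\ldots,x_n$ is one particular bijection from $V(G)$ to $\{x_1,\ldots,x_n\}$, so $\sum_{i=1}^{n-1}d(x_i,x_{i+1}) \leq \max_p \sum_{i=1}^{n-1} d(x_i,x_{i+1})$. Combining these three inequalities yields
$$c(x_n) \geq (n-1)(\diam(G)+1) + 1 - \max_p \sum_{i=1}^{n-1} d(x_i,x_{i+1}).$$

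Since $c$ was an arbitrary radio labeling and $c(x_n)$ is its span, taking the infimum over all radio labelings gives the desired lower bound on $rn(G)$. There is no real obstacle here; this is essentially just a clean application of the telescoping identity from the preceding proposition together with the trivial observations that $c(x_1)\geq 1$, $j_i\geq 0$, and that the sum of consecutive distances in the label ordering cannot exceed its maximum over all orderings.
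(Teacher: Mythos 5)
Your proof is correct and follows exactly the paper's approach: the paper simply states that the result follows by minimizing the right-hand side of the identity in Proposition \ref{prop:untelescope}, and you have spelled out precisely those minimizations ($c(x_1)\geq 1$, each $j_i\geq 0$, and bounding the distance sum by its maximum over all orderings). No issues.
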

\begin{proof} This result follows directly by minimizing the right side of the equation in Proposition \ref{prop:untelescope}.
\end{proof}

From Lemma \ref{lemma:distance} it is clear that finding $\max_p \sum_{i=1}^{n-1} d(x_i,x_{i+1})$ for a given graph will produce a lower bound for the radio number of this graph. This leads us to the following definition.

\begin{definition} We will call any labeling $c$ on a graph $G$ for which $\max_p \sum_{i=1}^{n-1} d(x_i,x_{i+1})$ is achieved a {\em distance maximizing labeling}. If $\max_p \sum_{i=1}^{n-1} d(x_i,x_{i+1}) - 1$ is achieved, we will call the labeling {\em almost distance maximizing}.
\end{definition}

The next lemma will be useful in finding the value for $\max_p \sum_{i=1}^{n-1} d(x_i,x_{i+1})$ for specific graphs.

\begin{lemma}\label{lem:distances}
Let $G$ be a graph with vertices $v_1,...,v_n$ and edges $e_1,...,e_m$. Let $p$ be a bijection from the vertices of $G$ to the set $\{x_1,...,x_n\}$. Let $P_j$ be a fixed shortest path from $x_j$ to $x_{j+1}$. Let $n(e_i)$ be the number of paths $P_j$ that contain the edge $e_i$. Then the following hold:
\begin{enumerate}

\item Each edge can appear in any path $P_j$ at most once.

\item Let $\{e^k_{i_1}, ... , e^k_{i_r}\}$ be all the edges incident to $x_k$. Then $n(e^k_{i_1})+ ... +n(e^k_{i_r})$ is even unless $k=1$ or $k=n$ in which case the sum is odd.

\item Suppose $e_i$ is an edge so that removing it from the graph gives a disconnected two component graph where the two components have $V_1$ and $V_2$ vertices, respectively.  Furthermore assume that if $x_j$ and $x_{j+1}$ are both contained in the same component, then so is $P_j$. Then $n(e_i)\leq 2 $min$\{V_1, V_2\}$.

\item Let $\{e_{i_1}, ... , e_{i_r}\}$ be a set of edges so that no two of them are ever contained in the same $P_j$. Then $n(e_{i_1})+ ... +n(e_{i_r})\leq n-1$.

\end{enumerate}
\end{lemma}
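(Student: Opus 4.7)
The four parts are essentially independent observations about how the shortest paths $P_j$ interact with $G$, so I would prove them in order. Part $1$ is immediate from the shortest-path hypothesis: if $P_j$ traversed some edge $e$ twice, one could delete the closed subwalk between the two occurrences of $e$ and produce a strictly shorter walk from $x_j$ to $x_{j+1}$, contradicting minimality.

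Part $2$ is a parity count at the vertex $x_k$. For each path $P_j$ I would tally its contribution to $n(e_{i_1}^k) + \cdots + n(e_{i_r}^k)$: if $x_k$ is an interior vertex of $P_j$, the path enters and leaves along two distinct incident edges, contributing $2$; if $x_k$ is an endpoint of $P_j$, exactly one incident edge is used, contributing $1$. Since $x_k$ is an endpoint of $P_j$ exactly when $j \in \{k-1, k\}$ (interpreting this at the boundary of the index range), for $2 \leq k \leq n-1$ two boundary contributions of $1$ combine with finitely many interior contributions of $2$ to produce an even total, whereas for $k = 1$ or $k = n$ only a single boundary contribution appears, making the sum odd.

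Part $3$ uses the cut structure. Deleting $e_i$ disconnects $G$ into two pieces with $V_1$ and $V_2$ vertices; by hypothesis any $P_j$ with both endpoints in the same component stays inside that component, so $e_i \in P_j$ if and only if $x_j$ and $x_{j+1}$ lie in different components. Moreover $P_j$ uses $e_i$ at most once by Part $1$, so $n(e_i)$ equals the number of sign changes in the binary sequence that records which component each $x_j$ occupies. Finally, a word of length $n$ containing $V_1$ symbols of one color and $V_2$ of the other has at most $2\min\{V_1, V_2\}$ adjacent unequal pairs, since each minority symbol can contribute at most $2$ such sign changes (one on each side); this gives the desired bound.

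Part $4$ is a straightforward double count: each path $P_j$ contains at most one edge from the specified set by hypothesis, so
\[
\sum_{s=1}^r n(e_{i_s}) = \sum_{j=1}^{n-1} |\{\, s : e_{i_s} \in P_j \,\}| \leq \sum_{j=1}^{n-1} 1 = n-1.
\]
None of the four parts presents a deep difficulty; the only step requiring any real care is the combinatorial estimate in Part $3$, where one must verify that the interleaving bound $2\min\{V_1, V_2\}$ is genuinely the correct maximum over all binary sequences of the prescribed type. The remaining parts each reduce to a single short observation: a shortest-path exchange (Part $1$), a local degree parity at $x_k$ (Part $2$), and a swap in the order of summation (Part $4$).
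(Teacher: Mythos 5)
Your proof is correct and follows essentially the same route as the paper's: a no-repeated-edges argument for shortest paths, a local parity count at $x_k$, the observation that $P_j$ crosses the cut edge $e_i$ exactly when $x_j$ and $x_{j+1}$ lie in different components, and a double count over the $n-1$ paths. Your Part 3 is in fact slightly more complete than the paper's, which asserts the bound $2\min\{V_1,V_2\}$ without spelling out the counting of sign changes that you make explicit.
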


\begin{proof}
The first conclusion follows from the fact that $P_j$ is a shortest path so it cannot contain any loops.

The second conclusion follows from the fact that if $x_k$ is not the endpoint of a path $P_j$ but the vertex is included in this path, two of its incident edges belong to the path. If $x_k$ is the endpoint of a path, then exactly one of its incident edges is part of the path. For $1< k <n$, $x_k$ is the endpoint of exactly two paths while each of $x_1$ and $x_n$ is an endpoint of exactly one of the paths.

A path $P_j$ contains the edge $e_i$ if and only if its endpoints are in different components of the graph obtained by deleting $e_{i}$. This observation verifies the third conclusion.

The final conclusion follows from the fact that there are $n-1$ paths and any edge can appear in a path at most once.

\end{proof}

Sometimes we will need a generalization of the third condition of the lemma above, i.e., we will need to simultaneously remove multiple edges to disconnect a graph. The following lemma describes the corresponding result in this case. We will only need this more general version in the last section of the paper.
 
\begin{lemma}\label{lem:multiple}
Let $G$ be a graph with vertices $v_1,...,v_n$ and edges $e_1,...,e_m$. Let $p$ be a bijection from the vertices of $G$ to the set $\{x_1,...,x_n\}$. Let $P_j$ be a fixed shortest path from $x_j$ to $x_{j+1}$. Let $n(e_i)$ be the number of paths $P_j$ that contain the edge $e_i$.
Let $\{e_{i_1}, ... , e_{i_r}\}$ be a set of edges so that removing all of them from the graph gives a disconnected two component graph where the two components have $V_1$ and $V_2$ vertices, respectively. Furthermore assume that 

\begin{itemize}
\item If $x_j$ and $x_{j+1}$ are both contained in the same component, then so is $P_j$, and
\item Each path $P_j$ contains at most one of the edges $\{e_{i_1}, ... , e_{i_r}\}$. \end{itemize}

Then $n(e_{i_1})+ ... +n(e_{i_r})\leq 2 $min$\{V_1, V_2\}$.

\end{lemma}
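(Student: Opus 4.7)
The plan is to reduce the statement to counting color changes in the sequence $x_1, x_2, \ldots, x_n$, where each vertex is colored by which of the two components it lies in after the edges $\{e_{i_1},\ldots,e_{i_r}\}$ are deleted. First I would show that
\[
n(e_{i_1}) + \cdots + n(e_{i_r}) \;=\; \#\{\,j : x_j \text{ and } x_{j+1} \text{ lie in different components}\,\}.
\]
Indeed, if $x_j$ and $x_{j+1}$ lie in the same component then the first hypothesis forces $P_j$ to stay in that component and so contain none of the $e_{i_k}$; if instead they lie in different components then $P_j$ must use at least one $e_{i_k}$ to cross, and by the second hypothesis it uses exactly one. Thus every index $j$ contributes either $0$ or $1$ to $\sum_k n(e_{i_k})$, and the contribution is $1$ precisely at color changes.

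Next I would bound the number of color changes in a two-colored sequence of lengths $V_1$ and $V_2$ by $2\min\{V_1,V_2\}$. Assume without loss of generality $V_1\leq V_2$. The sequence decomposes into maximal monochromatic runs; let $a$ be the number of runs colored with the minority color. Every color change is either the first vertex or the vertex immediately after the last vertex of some minority run, so the number of color changes is at most $2a$. Since each minority run contains at least one vertex of the minority color, $a\leq V_1$. Combining gives at most $2V_1 = 2\min\{V_1,V_2\}$ color changes, as required.

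There is no real obstacle here: the two bulleted hypotheses are tailored so that the argument from Lemma \ref{lem:distances}(3) generalizes almost verbatim from a single bridge edge to a multi-edge cut. The only mildly delicate point is the second hypothesis, which is precisely what prevents a single color change from being double-counted across several of the cut edges, and hence is exactly what is needed to turn the bijective correspondence between color changes and contributing paths into an equality of counts.
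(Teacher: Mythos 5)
Your proof is correct and follows essentially the same route as the paper's: reduce to counting the indices $j$ for which $x_j$ and $x_{j+1}$ lie in different components (using the first hypothesis to rule out contributions from same-component pairs and the second to ensure each crossing pair contributes at most one), then bound the number of such crossings by $2\min\{V_1,V_2\}$. The only difference is that you spell out the latter bound explicitly via the run decomposition, whereas the paper asserts it without detail.
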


\begin{proof}
 By the first condition a path $P_j$ can contain one of the edges $\{e_{i_1}, ... , e_{i_r}\}$ only if its endpoints are in different components of the disconnected graph. Thus there are at most $2 $min$\{V_1, V_2\}$ paths that contain one of these edges. By the second condition each path can contain at most one of the edges so $n(e_{i_1})+ ... +n(e_{i_r})\leq 2 $min$\{V_1, V_2\}$.
\end{proof}

\begin{rmk}
Let $G$ be a graph with vertices $v_1,...,v_n$ and edges $e_1,...,e_m$. Let $N(e_i)$ be the maximal value of $n(e_i)$ allowable under the conditions of Lemmas \ref{lem:distances} and \ref{lem:multiple}. Then  $\max_p \sum_{i=1}^{n-1} d(x_i,x_{i+1})\leq \sum_{j=1}^{m}N(e_j)$.
\end{rmk}

\section{Radio number of spire graphs--lower bound}\label{sec:lower}

We can now prove that the upper bound for $rn(S_{n,s})$ found in Section \ref{sec:upper} is also a lower bound. The result for odd values of $n$ follows easily from \cite{trees}. The proof for even values of $n$ is done in two steps. First we will compute a lower bound using Lemma \ref{lemma:distance} by determining $\max_p \sum_{i=1}^{n-1} d(x_i,x_{i+1})$ where $p$ is a bijection from $V(S_{n,s})$ to the set $\{x_1,...,x_n\}$. However this bound is not sharp so the second part of the proof shows how to improve the bound so it reaches the upper bound we established.

\begin{theorem} [\bf{Lower bound for $S_{n,s}$}]  \label{thm:lower}
Let $S_{n,s}$ be a spire graph, where $2 \leq s \leq \lfloor\frac{n}{2}\rfloor$.  Then, 
\begin{center}
$$rn(S_{n,s}) \geq   \begin{cases}

 2k^2 -4k + 2s +3 & \text{ if } n = 2k \text{ and }2 \leq s \leq k-2,\\
2k^2 -2k   & \text{if } n = 2k \text{ and }s= k-1,\\
2k^2 -2k + 1  & \text{if } n = 2k \text{ and }s= k,\\
2k^2 -2k + 2s & \text{if } n = 2k + 1.

\end{cases}
$$\end{center}
\end{theorem}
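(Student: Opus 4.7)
The plan is to apply the telescoping identity of Proposition \ref{prop:untelescope}: for any radio labeling $c$ with vertices reordered so that $c(x_1) < \cdots < c(x_n)$,
\begin{equation*}
c(x_n) \;=\; (n-1)(\diam(G)+1) + c(x_1) - D + J,
\end{equation*}
where $D = \sum_{i=1}^{n-1} d(x_i,x_{i+1})$ and $J = \sum_{i=1}^{n-1} j_i \geq 0$ records the total slack in the radio inequality. To lower-bound $c(x_n)$ one sets $c(x_1)=1$, makes $D$ as large as possible, and keeps $J$ small. The odd case $n=2k+1$ and the two ``near-middle'' even cases $s=k-1$ and $s=k$ are already covered by Liu's tree lower bound in \cite{trees}, which I would invoke directly. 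The substantive work is the remaining case $n = 2k$, $2 \leq s \leq k-2$.

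For this case I would first apply Lemma \ref{lem:distances}(3) to each of the $2k-1$ edges of $S_{n,s}$. Deleting a path edge $e_i=\{v_i,v_{i+1}\}$ splits the graph into pieces whose sizes depend on whether $i < s$, $s \leq i \leq k-1$, or $i \geq k$, and deleting the spire edge $e_{*}=\{v_s,v_n\}$ isolates $v_n$. Summing the resulting cut bounds yields
\begin{equation*}
D \;\leq\; \sum_{i=1}^{s-1}2i \,+\, \sum_{i=s}^{k-1}2(i+1) \,+\, \sum_{i=k}^{2k-2}2(2k-1-i) \,+\, 2 \;=\; 2k^{2}-2s+2,
\end{equation*}
so Lemma \ref{lemma:distance} with $\diam=2k-2$ gives $rn(S_{n,s}) \geq 2k^{2}-4k+2s$, which is three units short of the target.

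To recover the missing three units I would combine two further ingredients. First, the parity condition of Lemma \ref{lem:distances}(2): at every vertex $v_j$ of $S_{n,s}$ the sum of the cut-bound maxima over its incident edges works out to be \emph{even}, so whenever $v_j$ is chosen as an ordering endpoint $x_1$ or $x_n$ the actual incident sum must be odd, forcing at least one incident $n(e)$ to drop by an odd amount. Tracking the contributions of both endpoints---with care in the subcase where the endpoints are path-adjacent and share an edge, since a single odd drop on the shared edge can flip both parities simultaneously---extracts one or two units of deficit from $D$. The rest of the gap must come from a joint $D/J$ analysis: I would show that any labeling whose $D$ is within a small constant of the maximum is forced to incur $J \geq 1$ somewhere. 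I would prove this by a case analysis on the position of the spire vertex $v_n$ in the labeling order and on which of the three special vertices $v_1$, $v_{n-1}$, $v_n$ serve as endpoints, invoking cut sizes, parities, and the radio inequality in concert.

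The main obstacle is this last gap-closing step. The cut and parity bookkeeping is essentially mechanical, but ruling out a near-$D$-maximizing labeling with $J=0$ requires mixing information about the distance sum with information about the consecutive radio slacks. The hardest configurations are those in which $v_n$ sits near the centre of the ordering and $v_1, v_{n-1}$ are both internal, since these are the labelings for which the cut/parity estimate is most nearly saturated; for them one must exhibit a specific consecutive pair $(x_i,x_{i+1})$ whose distance and label difference cannot simultaneously meet the radio bound tightly, which is what the remaining unit of the lower bound ultimately encodes.
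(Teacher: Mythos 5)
Your framework---telescoping via Proposition \ref{prop:untelescope}, citing Liu for $n=2k+1$ and for $n=2k$ with $s\in\{k-1,k\}$, then bounding $D=\sum d(x_i,x_{i+1})$ by edge-cut counts for the remaining case---is exactly the paper's. But the proposal has two concrete problems in the case $n=2k$, $2\le s\le k-2$. First, your bound $D\le 2k^2-2s+2$ is not the sharpest available from Lemma \ref{lem:distances}: conclusion (4) applied to the single middle edge $e_{k-1}$ gives $n(e_{k-1})\le n-1=2k-1$ rather than the cut bound $2k$, so in fact $D\le 2k^2-2s+1$ and the deficit to close is $2$, not $3$. Your proposed parity repair cannot do better than this anyway: with all cut bounds even, the requirement that the incident sums at $x_1$ and $x_n$ be odd can be satisfied by a single unit drop on one shared edge when $x_1$ and $x_n$ are adjacent, so parity alone recovers exactly one unit in the worst case. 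You are then left needing total slack $J\ge 2$ for distance-maximizing labelings and $J\ge 1$ for almost-distance-maximizing ones, which is precisely the part you defer.

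Second, and more seriously, the mechanism you sketch for producing that slack---``exhibit a specific consecutive pair $(x_i,x_{i+1})$ whose distance and label difference cannot simultaneously meet the radio bound tightly''---is not how the slack actually arises, and I do not see how to make it work. In the paper's argument the slack comes from applying the radio condition to the \emph{non-consecutive} pair $x_{i-1},x_{i+1}$: when $n(e_{k-1})=2k-1$ the labeling must alternate between $\{v_1,\dots,v_{k-1},v_n\}$ and $\{v_k,\dots,v_{2k-1}\}$, and for any such alternating triple one derives $j_i+j_{i+1}\ge 2\,d(x_i,\alpha)-n+1$ (with a correction when $\alpha=v_n$), where $\alpha$ is the nearer of $x_{i\pm1}$ to $x_i$. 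Applying this to the triple centered at $v_{2k-1}$ forces $j$-slack at least $1$, and equality forces its near neighbor to be $v_{k-1}$; but $v_{k-1}$ must be the first or last labeled vertex (its incident $n(e)$-sum is odd), so the triple centered at $v_{2k-2}$ then contributes another unit. Without this triple/alternation argument---or an equivalent substitute---your plan does not close the gap, and you acknowledge as much. As it stands the proposal establishes only $rn(S_{2k,s})\ge 2k^2-4k+2s$ (or $+1$ after the parity fix), short of the claimed $2k^2-4k+2s+3$.
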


\begin{proof}

If $n = 2k+1$ the desired lower bound follows directly from Corollary 5 of \cite{trees}: we observe that $S_{n,s}$ is a spider (a tree with at most one vertex of degree more than two) so $$rn(S_{n,s}) \geq 2k^2 -2k + 2s.$$ Similarly if $n=2k$, and $s=k-1$ or $s=k$, the desired bound follows from Theorem 12 of \cite{trees}.

Assume then that $n=2k$, and $s \leq k-2$. First we determine $\max_p \sum_{i=1}^{n-1} d(x_i,x_{i+1})$ where $p$ is a bijection from $V(S_{n,s})$ to the set $\{x_1,...,x_n\}$.

Name the edges of $S_{2k,s}$ so that for $1\leq i \leq n-2$, $e_i$ is the edge between $v_i$ and $v_{i+1}$ and let $e_{n-1}$ be the edge between $v_s$ and $v_n$. The distance between $x_j$ and $x_{j+1}$ is the number of edges in the shortest path $P_j$ between these two vertices in the graph. Note that removing any edge $e_i$ from $S_{2k,s}$ results in a disconnected graph of two components. By the third and fourth conclusions of Lemma \ref{lem:distances}, (see also Figure \ref{fig:SpireGraph}), it follows that:

\begin{center}

$$N(e_i) =  \begin{cases}

2i & \text{if } i\leq s-1, \\
2i+2& \text{if } s\leq i\leq k-2, \\
2k-1& \text{if }  i= k-1,  \\
2(2k-1-i)& \text{if }  k\leq i\leq 2k-2,  \\
2& \text{if }   i= 2k-1.  \\
\end{cases}
$$
\end{center}

So $\max_p \sum_{i=1}^{n-1} d(x_i,x_{i+1})\leq \sum_{i=1}^{n-1} N(e_i)=2k^2 -2s + 1$. Thus we substitute this sum into the maximum distance lower bound to find that $$rn(S_{2k,s}) \geq 2k^2 - 4k + 2s + 1.$$

We now argue that this lower bound for $rn(S_{2k,s})$ can be increased by $2$. Recall that if $\tilde{c}$ is a radio labeling of $S_{2k,s}$ then for each $i \in \{1,...,n-1\}$ there is a non-negative integer $j_i$ such that $d(x_i, x_{i+1})+\tilde{c}(x_{i+1})-\tilde{c}({x_i})=n-1+j_i$. We will show that if $\tilde{c}$ is distance maximizing, then $\sum_{i=1}^{n-1}j_i\geq 2$ and if $\tilde{c}$ is almost distance maximizing, then $\sum_{i=1}^{n-1}j_i\geq 1$. In either case we conclude that $$rn(S_{2k,s}) \geq (2k^2 - 4k + 2s + 1)+2.$$

\vspace{.3in}

\textbf{Claim:} Let $c$ be a radio labeling of $G$ and let $\{x_{i-1}, x_i,  x_{i+1}\}$ be three consecutively labeled vertices such that $c(x_{i-1})<c( x_i)<c( x_{i+1})$. Assume that $x_{i-1}, x_{i+1} \in \{v_1, v_2,..., v_s, ... v_{k-1}, v_{n}\}$ and $x_i \in \{v_k, v_{k+1}, ..., v_{2k-1}\}$.  Let $\alpha$ denote $x_{i-1}$ or $x_{i+1}$, whichever has smaller distance to $x_i$, and we let $\beta$ denote the one with the larger distance to $x_i$ (the only case in which the two distances are equal is when $x_{i-1}=v_n$ and $x_{i+1}=v_{s-1}$ (or vice versa); in this case let $\alpha$ be $v_n$). Let $j_i$ and $j_{i+1}$ be non-negative integers such that $$d(x_i,  \alpha) + |c(x_i) - c( \alpha)| = n-1+ j_i$$ and $$ d(x_i, \beta) + |c(\beta) - c(x_i)| = n-1+ j_{i+1}.$$ Then $$j_i + j_{i+1} \geq \left\{
\begin{array}{c l}
  2(d(x_i, \alpha)) - n + 1 & \alpha \neq\ v_n, \\
  2(d(x_i, \alpha)) - n - 1 & \alpha = v_n,
\end{array}
\right.
$$

{\em Proof of claim:}
Let $\{x_{i-1}, x_i, x_{i+1}\}$ be a triple of vertices satisfying the hypotheses of the claim.  We observe that $$d(\alpha,\beta) = \left\{
\begin{array}{c l}
  d(x_i, \beta) - d(x_i, \alpha) & \alpha\neq\ v_n, \\
  d(x_i, \beta) - d(x_i, \alpha) + 2 & \alpha = v_n.
\end{array}
\right.
$$ 

We will prove the claim in detail in the case when $c(\alpha) < c(x_i) < c(\beta)$ and $\alpha \neq v_n$. For the other cases we only present the final result and let the interested reader verify the details of the computations.  

The radio condition applied to the pair of vertices $\alpha$ and $\beta$ gives $$n-1 \leq d(\alpha,\beta) + c(\beta) - c(\alpha).$$ 
We substitute $d(\alpha,\beta) =  d(x_i, \beta) - d(x_i, \alpha)$ in the above equation and add and subtract $c(x_i)$ to obtain $$n-1 \leq d(x_i, \beta) - d(x_i, \alpha) + c(\beta) - c(\alpha) + c(x_i) - c(x_i).$$ Recall that
$$d(x_i,  \alpha) + c(x_i) - c( \alpha) = n-1+ j_i$$ and $$ d(x_i, \beta) + c(\beta) - c(x_i) = n-1+ j_{i+1}$$ for some non-negative integers $j_i$ and $j_{i+1}$.
We now make a series of substitutions to obtain a lower bound for $j_i + j_{i+1}$.  First, we substitute $d(x_i, \beta) + c(\beta) - c(x_i) = n-1+j_{i+1}$ and add and subtract $j_i$ to obtain $$n-1 \leq n - 1 + j_{i+1} - d(x_i, \alpha) - c(\alpha) + c(x_i) + j_i - j_i.$$
Now, we substitute $n-1 + j_i = d(x_i, \alpha) + c(x_i) - c(\alpha)$, which yields, after cancelling $d(x_i, \alpha)$, $$n-1 \leq 2(c(x_i) - c(\alpha)) + j_{i+1} - j_i.$$
Solving for $c(x_i) - c(\alpha)$ and multiplying through by $(-1)$ shows that 
$$c(\alpha) - c(x_i) \leq \frac{1}{2}(-n + 1 + j_{i+1} - j_i).$$ 
Then
$$d(x_i, \alpha) + c(x_i) - c(\alpha) = n-1+ j_i$$
$$ \implies d(x_i, \alpha) = n-1+j_i +c(\alpha) - c(x_i) $$
$$ \implies d(x_i, \alpha) \leq n-1+j_i +\frac{1}{2}(-n + 1 + j_{i+1} - j_i) = \frac{1}{2}(n-1+j_i + j_{i+1})$$
$$ \implies j_i + j_{i+1} \geq 2(d(x_i, \alpha)) - n + 1,$$
and we have obtained the desired lower bound for $j_i + j_{i+1}$.  Making similar series of substitutions in the other three cases depending on the label order of $\alpha, x_i$ and $\beta$ and on whether or not $\alpha = v_n$ shows that $$j_i + j_{i+1} \geq \left\{
\begin{array}{c l}
  2(d(x_i, \alpha)) - n + 1 & \alpha \neq\ v_n, \\
  2(d(x_i, \alpha)) - n - 1 & \alpha = v_n.
\end{array}
\right.
$$

\qed

\vspace{0.3in}

From these two inequalities, we construct Table \ref{Table 7}, in which each entry gives the lower bound for the $j_i + j_{i+1}$ associated to the corresponding $x_i \in \{v_k, ..., v_{2k-1}\}$ and $\alpha \in \{v_1, ..., v_{k-1}, v_{n}\}$ based on the equation above.

\begin{table}
\centering
    \begin{tabular}{ | c | c | c | c | c | c | c | c | }
    \hline
      & $v_k$ & $v_{k+1}$ & $v_{k+2}$ & $...$ & $v_{2k-3}$ & $v_{2k-2}$ & $v_{2k-1}$ \\\hline
     $v_1$ & $0$ & $1$ & $3$ & $...$ & $2k-7$ & $2k-5$ & $2k-3$ \\\hline
     $v_2$ & $0$ & $0$ & $1$ & $...$ & $2k-9$ & $2k-7$ & $2k-5$ \\\hline
     $v_3$ & $0$ & $0$ & $0$ & $...$ & $2k-11$ & $2k-9$ & $2k-7$ \\\hline
     $\vdots$ & $\vdots$ & $\vdots$ & $\vdots$ & $...$ & $\vdots$ & $\vdots$ & $\vdots$ \\\hline
     $v_{k-3}$ & $0$ & $0$ & $0$ & $...$ & $1$ & $3$ & $5$ \\\hline
     $v_{k-2}$ & $0$ & $0$ & $0$ & $...$ & $0$ & $1$ & $3$ \\\hline
     $v_{k-1}$ & $0$ & $0$ & $0$ & $...$ & $0$ & $0$ & $1$ \\\hline
     $v_n$ & $\geq 0$ & $\geq 0$ & $\geq 0$ & $...$ & $\geq 0$ & $\geq 1$ & $\geq 3$ \\\hline
    \end{tabular}
\caption{}
\label{Table 7}
\end{table}

Suppose $c$ is any distance maximizing radio labeling of $S_{2k,s}$. Note that in this case $n(e_{k-1})=2k-1$ so by conclusion 3 of Lemma \ref{lem:distances} if $x_{i}$ is in the set $\{v_k, ..., v_{2k-1}\}$, then $x_{i-1}$ and $x_{i+1}$ are in the set $\{v_1, ..., v_{k-1}, v_n\}$ so the hypotheses of the claim are satisfied for the triple $\{x_{i-1}, x_i, x_{i+1}\}$. By the claim a lower bound for $j_i + j_{i+1}$ is given by Table \ref{Table 7}. Let $m$ be such that $x_m=v_{2k-1}$. In any distance maximizing radio labeling $n(e_{2k-2})=2$. By conclusion 2 of Lemma \ref{lem:distances}, as $n(e_{2k-2})$ is even, $v_{2k-1}$ is not the first or last labeled vertex. Therefore $1<m<n$ and we can use Table \ref{Table 7} to compute a lower bound of 1 for $j_m+j_{m+1}$.

 If $j_m+j_{m+1}>1$ then $\sum_{i=1}^{n-1} j_i\geq 2$ as desired. If $j_m+j_{m+1}=1$ then either $x_{m-1}$ or $x_{m+1}$, whichever is closest to $v_{2k-1}$, is $v_{k-1}$, as this is the only row with an entry less than 2 in the last column of Table \ref{Table 7}. In any distance maximizing labeling, $v_{k-1}$ must be the first or last vertex labeled because $n(e_{k-2})+n(e_{k-1})$ is odd. Without loss of generality assume that $v_{k-1}$ is the first labeled vertex and so $m=2$. Now consider the vertex $v_{2k-2}$ which corresponds to some $x_r$ with $r\geq 4$. Therefore $r-1 \geq 3$ so in particular $x_{r-1}, x_{r+1} \neq v_{k-1}$.  Thus $j_r+j_{r+1} \geq 1$ and so $\sum_{i=1}^{n-1} j_i\geq 2$ as desired.

Now we consider an almost distance maximizing radio labeling $c'$ of $S_{2k,s}$. As $c'$ is almost distance maximizing exactly one of the $n(e_i)$ values considered above is exactly one less. If this value is $n(e_{k-1})$, then all values for $n(e_i)$ would be even contradicting conclusion 2 of Lemma \ref{lem:distances}. Thus $n(e_{k-1})=2k-1$ in this case too so by conclusion 2 of Lemma \ref{lem:distances} if $x_{i}$ is in the set $\{v_k, ..., v_{2k-1}\}$, then the hypotheses of the claim are satisfied for the triple $\{x_{i-1}, x_i, x_{i+1}\}$. Therefore the above argument when $x_m=v_{2k-1}$ still holds and so $\sum_{i=1}^{n-1} j_i\geq 1$.

In conclusion, we have shown that if $c$ is distance maximizing then $\sum_{i=1}^{n-1} j_i\geq 2$ and if $c'$ is almost distance maximizing then $\sum_{i=1}^{n-1} j_i\geq 1$. In either case by Proposition \ref{prop:untelescope} we conclude that $rn(S_{2k,s}) \geq 2k^2 - 4k + 2s + 3.$ If $c$ is neither distance maximizing, nor almost distance maximizing then by Proposition \ref{prop:untelescope} it follows that $rn(S_{2k,s}) \geq 2k^2 - 4k + 2s + 3$ as $\sum_{i=1}^{n-1} j_i$ is always non-negative.

\end{proof}

\section{Radio number of all other diameter $n-2$ graphs}\label{sec:others}

In this section we will determine the radio number of all other diameter $n-2$ graphs.  We start  with some definitions.

\begin{definition}Let $n,s \in \mathbb{Z}$ where  $n \geq 4$ and $2 \leq s \leq n$. 
We define the graph $S^1_{n,s}$  with vertices $\{v_1,...,v_n\}$, and edges $\{v_i,v_{i+1} | i=1,2,...,n-2\}$ together with the edges $\{v_s,v_n\}$ and $\{v_{s-1},v_n\}$.  Without loss of generality we will always assume that $s \leq \lfloor\frac{n+1}{2}\rfloor$. See Figure \ref{fig:SpireGraphS_1}.
\end{definition}

\begin{figure}[h]
\begin{center}
\includegraphics[scale=.5]{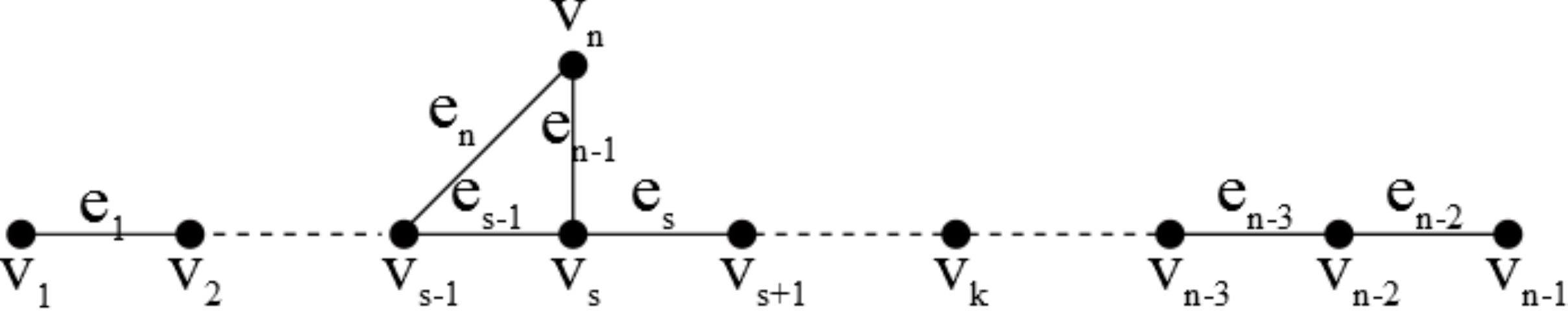}
\caption{$S^1_{n,s}$}
\label{fig:SpireGraphS_1}
\end{center}
\end{figure}

\begin{definition}Let $n,s \in \mathbb{Z}$ where  $n \geq 4$ and $3 \leq s \leq n$. 
We define the graph $S^{2}_{n,s}$  with vertices $\{v_1,...,v_n\}$, and edges $\{v_i,v_{i+1} | i=1,2,...,n-2\}$ together with the edges $\{v_s,v_n\}$ and $\{v_{s-2},v_n\}$.  Without loss of generality we will always assume that $s \leq \lfloor\frac{n+2}{2}\rfloor$. See Figure \ref{fig:SpireGraphS_2}.
\end{definition}

\begin{figure}[h]
\begin{center}
\includegraphics[scale=.5]{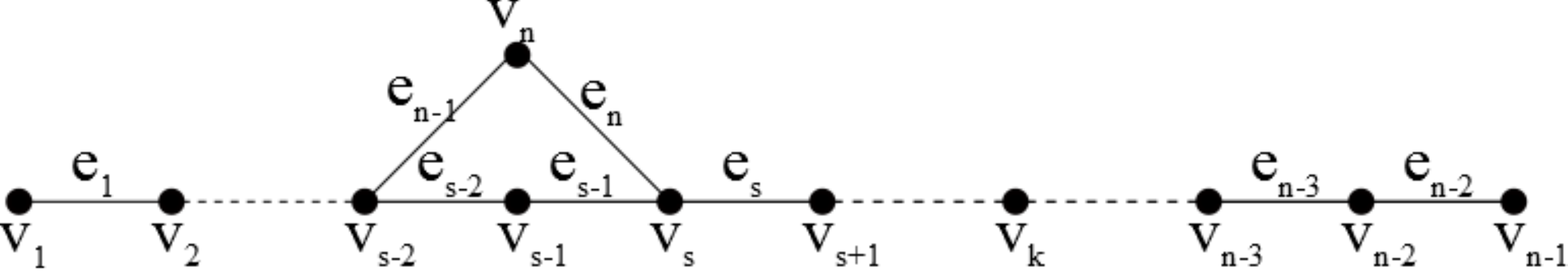}
\caption{$S^2_{n,s}$}
\label{fig:SpireGraphS_2}
\end{center}
\end{figure}

\begin{definition}Let $n,s \in \mathbb{Z}$ where  $n \geq 4$ and $3 \leq s \leq n$. 
We define the graph $S^{1,2}_{n,s}$  with vertices $\{v_1,...,v_n\}$, and edges $\{v_i,v_{i+1} | i=1,2,...,n-2\}$ together with the edges $\{v_s,v_n\}$, $\{v_{s-1},v_n\}$ and $\{v_{s-2},v_n\}$.  Without loss of generality we will always assume that $s \leq \lfloor\frac{n+2}{2}\rfloor$. See Figure \ref{fig:SpireGraphS_1,2}.
\end{definition}

\begin{figure}[h]
\begin{center}
\includegraphics[scale=.5]{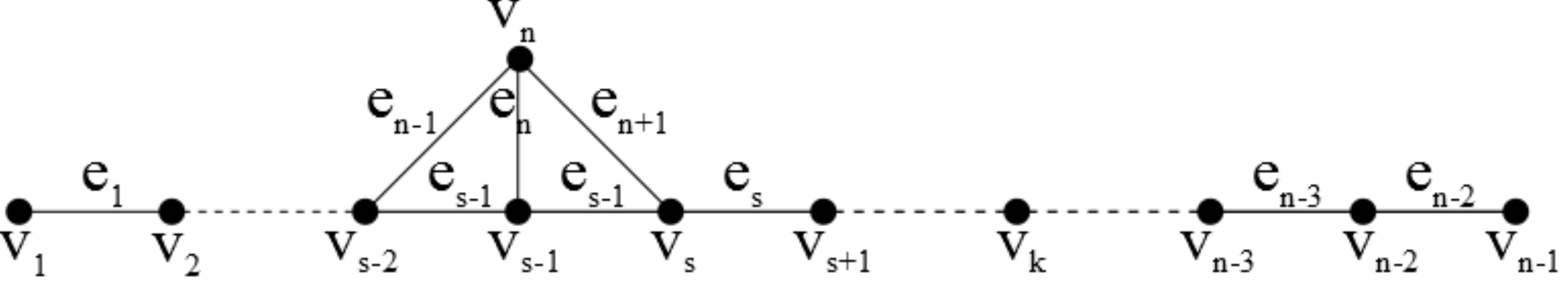}
\caption{$S^{1,2}_{n,s}$}
\label{fig:SpireGraphS_1,2}
\end{center}
\end{figure}

Note that these and spire graphs are all possible $n$-vertex, $n-2$-diameter graphs.  To determine the radio numbers of these graphs, we begin with an easy remark:

\begin{rmk} \label{rmk:edge removal}
If a connected graph $G'$  results from removing one or more edges from a connected graph $G$ and $diam(G') = diam(G)$, then $rn(G') \leq rn(G)$.
\end{rmk}

\begin{theorem}\label{thm:others}
For $2 \leq s \leq \lfloor\frac{n}{2}\rfloor$, $rn(S^*_{n,s})=rn(S_{n,s})$ where $rn(S^*_{n,s})$ is any one of $rn(S^{1}_{n,s})$, $rn(S^{2}_{n,s})$ or $rn(S^{1,2}_{n,s})$.

\end{theorem}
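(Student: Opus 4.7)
The plan is to prove $rn(S_{n,s}) \leq rn(S^*_{n,s})$ and $rn(S^*_{n,s}) \leq rn(S_{n,s})$ separately.

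The lower bound is immediate from Remark~\ref{rmk:edge removal}: each of the graphs $S^1_{n,s}$, $S^2_{n,s}$, and $S^{1,2}_{n,s}$ is obtained from $S_{n,s}$ by adding one or two extra edges from the spire to nearby path vertices, so $S_{n,s}$ is obtained from $S^*_{n,s}$ by deleting those same edges. For $s \leq \lfloor n/2 \rfloor$ a short computation shows all four graphs have diameter $n-2$: the pair $v_1, v_{n-1}$ still has distance $n-2$ along the path (any route through $v_n$ is strictly longer), while the eccentricity of $v_n$ in $S^*_{n,s}$ is $n-s \leq n-2$. Remark~\ref{rmk:edge removal} then yields $rn(S_{n,s}) \leq rn(S^*_{n,s})$.

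For the upper bound, the plan is to show that the explicit radio labelings $c$ defined in each of the four subcases of Theorem~\ref{thm:upper} remain radio labelings on $S^*_{n,s}$. Since adding edges can only decrease distances while the diameter is unchanged, the only radio conditions that could fail are those for pairs $(u,v)$ whose distance strictly decreased. A direct case analysis shows that these are exactly the pairs $(v_j, v_n)$: any path between two path-vertices that travels through $v_n$ still costs at least two more edges than the direct path, so no distance between two path-vertices is reduced. More precisely, in $S^1_{n,s}$ the distance $d(v_j, v_n)$ drops by $1$ for $j \leq s-1$; in $S^2_{n,s}$ it drops by $2$ for $j \leq s-2$; and in $S^{1,2}_{n,s}$ both drops occur in their respective ranges.

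The main technical work, and the expected obstacle, is verifying for each such pair that $|c(v_j) - c(v_n)|$ is large enough to absorb this loss of $1$ or $2$ in the distance. The structural feature that makes this go through is that the labelings of Theorem~\ref{thm:upper} always place $v_n$ very near one end of the labeling order---at positions $x_2$ in Subcase~1A, $x_3$ in Subcase~1B, $x_n$ in Subcase~1C, and $x_{n-4}$ in Case~2---whereas each of $v_1, \ldots, v_{s-1}$ lies in Group~III or deep inside Group~II. Consequently the gap $|c(v_j) - c(v_n)|$ for the affected pairs is on the order of the total span, which is vastly more than the one or two additional units we need. The only borderline pairs are those in which a $v_j$ with $j \leq s-1$ happens to be labeled adjacent to $v_n$ in the labeling order; these form a short finite list, one or two per subcase, and can be dispatched by direct inspection of the tables in Theorem~\ref{thm:upper}, exactly in the style of the case analyses carried out in that proof.
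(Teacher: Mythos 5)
Your proposal is correct and follows essentially the same route as the paper: one direction via Remark~\ref{rmk:edge removal} (edge deletion preserving diameter), and the other by transferring the explicit labelings of Theorem~\ref{thm:upper} to $S^*_{n,s}$ after observing that only the distances $d(v_j,v_n)$ with $j\leq s-1$ can decrease, then checking the few borderline pairs against the tables. The paper carries out the table inspection explicitly only for the case $n=2k$, $s\leq k-2$ (where the critical pair is $\{x_4,x_2\}=\{v_5,v_n\}$, two apart in the labeling order rather than adjacent) and leaves the remaining cases to the reader, just as you do.
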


\begin{proof}
For $2 \leq s \leq \lfloor\frac{n}{2}\rfloor$ the graph $S_{n,s}$ results from removing an edge from either $S^{1}_{n,s}$ or $S^{2}_{n,s}$, both of which result from removing an edge from $S^{1,2}_{n,s}$.  Since all the graphs have diameter $n-2$, by Remark \ref{rmk:edge removal}
$$rn(S_{n,s}) \leq rn(S^{1}_{n,s}) \leq rn(S^{1,2}_{n,s}), \text{ and} $$ 
$$rn(S_{n,s}) \leq rn(S^{2}_{n,s}) \leq rn(S^{1,2}_{n,s}).$$

By the above discussion, we only need to show that $rn(S_{n,s}) \geq rn(S^*_{n,s}) $. We will do that by demonstrating that the radio labeling for $S_{n,s}$ given in Theorem \ref{thm:upper} induces a radio labeling for $S^*_{n,s}$ with the same span. Let $\{v_1,...,v_n\}$ be the vertices of $S_{n,s}$ and let $\{v^*_1,...,v^*_n\}$ be the vertices of $S^*_{n,s}$. Let $c^*:V(S^*_{n,s})\rightarrow \mathbb Z_+$ be given by $c^*(v^*_i)=c(v_i)$ where $c$ is the function in Theorem \ref{thm:upper} (for the corresponding case).

Notice that $d(v^*_i, v^*_j) = d(v_i, v_j)$ for all  $j>i$ except possibly when $j=n$ and $i \leq s-1$.  Thus to verify that $c^*$ is a radio labeling, we only need to verify the radio condition for the pairs $\{v^*_i, v^*_n\}$, where $i \leq s-1$.
 
{\bf Case 1:} $n=2k$ and $s\leq k-2$. 
  
	By Theorem \ref{thm:upper} we have that $c^*(v^*_n)=c(x_2)$ so we verify the radio condition for all pairs $\{x_i, x_2\}$.  Recall that we are assuming that $s\geq 2$ and so $k\geq 4$. By adding the entries in the $2^{nd}$, $3^{rd}$, and $4^{th}$ rows of the last column of Table \ref{Table 1}, we calculate that for all $i \geq 5$, $c^*(x_i)-c^*(x_2) \geq 3k+s-5 \geq 2k-1$.

	Thus regardless of the value of $s$, the radio condition is satisfied for all $i \geq 5$.  Note that $x_1$ corresponds to $v^*_k$ and $x_3$ corresponds to $v^*_{k+4}$. As $s\leq k-2$, $d(v_i, v_n)=d(v^*_i, v^*_n)$ for $i=k,k+4$ so the radio condition is satisfied for these pairs. Finally we consider the pair $\{x_4, x_2\}$.  Noting that $x_4$ corresponds to $v^*_5$, we have that $d(v_5, v_n)=d(v^*_5, v^*_n)$ if $s \leq 5$ and the radio condition is satisfied.  If $s \geq 6$, then by adding the entries in the $2^{nd}$ and $3^{rd}$ rows of the last column of Table \ref{Table 1}, we calculate that $c^*(x_4)-c^*(x_2) = 2k+s-6 \geq 2k+6-6= 2k$, and the radio condition is satisfied.  

{\bf   Case 2:} $n=2k$, and $s=k-1$ or $s=k$. 
   
As these cases are straightforward, we leave it to the reader to check them using Tables 3 and 4.
 
{\bf Case 3:} $n=2k+1$ and $2\leq s\leq k$. 

The reader can check these using Tables 5 and 6. \end{proof}

Theorem \ref{thm:others} leaves out only a few graphs with diameter $n-2$. The following theorem establishes the radio number in those cases:

\begin{theorem}

$rn(S^1_{2k+1,k+1})=2k^2 +1$.

$rn(S^{1,2}_{2k+1,k+1})=2k^2 +1$.

$rn(S^2_{2k+1,k+1})=2k^2$.

$rn(S^{1,2}_{2k,k+1})=2k^2 -2k+2$.

$rn(S^2_{2k,k+1})=2k^2 -2k+1$.

\end{theorem}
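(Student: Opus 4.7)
The theorem consists of five separate identities, and my plan is to prove each by establishing matching upper and lower bounds, reusing the infrastructure already set up in Sections~\ref{sec:upper}--\ref{sec:others}.

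For the upper bounds, the strategy is to construct explicit radio labelings via tables analogous to Tables \ref{Table 1}--\ref{Table 5}. Each of the five graphs is obtained from an appropriate spire graph $S_{n,s}$ by adjoining one or two extra spire edges, and in every case the diameter is unchanged. Consequently one may begin with the labeling of $S_{n,s}$ built in Theorem \ref{thm:upper} and re-verify the radio condition, making small local modifications to the label order where necessary. The re-verification requires attention only to vertex pairs $\{v_i, v_n\}$ for path vertices $v_i$ whose distance to the spire $v_n$ has been shortened by the new edges. A direct but case-specific computation then produces labelings with spans exactly $2k^2+1$, $2k^2+1$, $2k^2$, $2k^2-2k+2$, and $2k^2-2k+1$.

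Several of the lower bounds come for free from Remark \ref{rmk:edge removal}. Since $S^1_{2k+1,k+1}$ is obtained from $S^{1,2}_{2k+1,k+1}$ by removing the edge $\{v_{k-1},v_{2k+1}\}$, once the first identity is known the second follows. Similarly, removing $\{v_{k-1},v_{2k+1}\}$ from $S^2_{2k+1,k+1}$ produces a graph which, after the reflection $v_i \mapsto v_{2k+1-i}$ of the path, is $S_{2k+1,k}$, whose radio number equals $2k^2$ by the main theorem; hence $rn(S^2_{2k+1,k+1}) \geq 2k^2$. The remaining three lower bounds --- for $S^1_{2k+1,k+1}$, $S^{1,2}_{2k,k+1}$, and $S^2_{2k,k+1}$ --- require the full distance-maximizing machinery of Section \ref{sec:lower}. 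For each such graph the plan is to compute $N(e_i)$ for every edge by applying Lemma \ref{lem:distances} to single-edge path cuts and Lemma \ref{lem:multiple} to the two-edge cuts that separate the spire's neighborhood from the rest (for instance, removing the pair $\{v_{k-1},v_k\}$ and $\{v_{k+1},v_{2k}\}$ in $S^2_{2k,k+1}$ yields two components of $k$ vertices each), thereby obtaining via Proposition \ref{prop:untelescope} a baseline bound one unit below the target. The extra $+1$ is then extracted by adapting the Claim inside the proof of Theorem \ref{thm:lower}: under any distance-maximizing labeling the parity condition from conclusion 2 of Lemma \ref{lem:distances} forces a specific vertex (playing the role of $v_{2k-1}$ in the earlier proof) to occupy a non-extremal position in the label order, and a case analysis on its two neighbors in the label order produces $\sum j_i \geq 1$.

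The main obstacle will be this last step. The Claim in Theorem \ref{thm:lower} is tailored to the edge structure of $S_{2k,s}$ (spire attached at a single off-center path vertex), and both the list of critical triples $\{x_{i-1},x_i,x_{i+1}\}$ and the entries of Table \ref{Table 7} must be recomputed once the spire is attached to two or three nearly central path vertices. In the graphs under consideration the distances from $v_n$ to path vertices are now symmetric about the center, so different edges become the forced bottlenecks and a different vertex plays the extremal role; carrying out this case analysis carefully for each of the three graphs is where the bulk of the work will lie.
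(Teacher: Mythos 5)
Your outline gets the easy parts right: the upper bounds do come from explicit table-style labelings (though the paper's Tables \ref{Table 8} and \ref{Table 9} are new orderings exploiting the shortened distance to $v_n$, not perturbations of the Theorem \ref{thm:upper} labelings), and the lower bounds for $S^{1,2}_{2k+1,k+1}$ and $S^2_{2k+1,k+1}$ do follow from Remark \ref{rmk:edge removal} together with, respectively, the $S^1_{2k+1,k+1}$ case and the symmetry $S_{2k+1,k+1}\cong S_{2k+1,k}$, exactly as you say.

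The gap is in the remaining three lower bounds. You assert that the distance-maximization machinery yields ``a baseline bound one unit below the target'' and that the missing unit must be recovered by adapting the Claim and Table \ref{Table 7} from the proof of Theorem \ref{thm:lower} --- and you defer that adaptation as ``the bulk of the work.'' In fact no such adaptation is needed, and the step you defer is the wrong step. For $S^1_{2k+1,k+1}$ the decisive observation is that every shortest path $P_j$ uses at most one of the three edges $e_k$, $e_{2k}$, $e_{2k+1}$ surrounding the spire, so by conclusion 4 of Lemma \ref{lem:distances} their counts sum to at most $n-1=2k$; this already makes $\max_p\sum d(x_i,x_{i+1})\leq 2k^2$ and gives $rn\geq 2k^2+1$ with no excess-$j_i$ argument at all. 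For $S^2_{2k,k+1}$, Lemma \ref{lem:multiple} applied to the two $2$-edge cuts (with a $+1$ correction for the single possible path between $v_k$ and $v_{2k}$ that uses two cut edges) gives $\max_p\sum d\leq 2k^2-2k+1$ and hence the target directly. Only for $S^{1,2}_{2k,k+1}$ is there a genuine extra unit to extract, and there it is obtained by a three-line parity contradiction \emph{inside} the distance computation (if $n(e_{k-1})+n(e_{2k-1})$, $n(e_k)+n(e_{2k+1})$ and $n(e_{2k})$ were all maximal, then $v_k$ and $v_{2k}$ would have to be both the two extremal vertices $x_1,x_n$ and consecutive in the label order), not by rebuilding Table \ref{Table 7}. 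Since your proposal neither contains these counting arguments nor carries out the heavier substitute you propose --- whose feasibility for centrally-attached spires is itself unclear, because the original Claim relies on $e_{k-1}$ being a cut edge with $n(e_{k-1})=2k-1$ forcing strict alternation between the two halves --- the lower-bound half of the proof for these three graphs is missing.
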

\begin{proof}
{\bf Case 1:} $S^1_{2k+1,k+1}$.
We first prove that $2k^2+1$ is an upper bound for $rn(S^1_{2k+1,k+1})$.  Order the vertices of $S^1_{2k+1,k+1}$ into three groups as follows:
\begin{center}
Group I: $v_k, v_{2k+1}$,\\
Group II: $v_{2k}, v_{k-1}, v_{2k-1}, v_{k-2}, ..., v_{k+2}, v_1$,\\
Group III: $v_{k+1}$.
\end{center}

Now, rename the vertices of $S^1_{2k+1,k+1}$ in the above ordering by $x_1, x_2, \ldots, x_n$.  This is the label order of the vertices of $S^1_{2k+1,k+1}$.\\

{\bf Claim:} The function $c$ defined in Table \ref{Table 8} is a radio labeling on $S^1_{2k+1,k+1}$.\\

\begin{table}
\centering
\begin{tabular}{|c|c|c|c|}
\hline
$x_i$ & Vertex Names & $d(x_i, x_{i+1})$ & $c(x_{i+1})-c(x_i)$\\
\hline
$x_1$ & $v_k$ & $1$ & $2k-1$ \\
$x_2$ & $v_{2k+1}$ & $k$ & $k$\\
\hline
$x_3$ & $v_{2k}$ & $k+1$ & $k-1$\\
$x_4$ & $v_{k-1}$ & $k$ & $k$\\
$x_5$ & $v_{2k-1}$ & $k+1$ & $k-1$\\
$x_6$ & $v_{k-2}$ & $k$ & $k$\\
$\vdots$ & $\vdots$& $\vdots$ & $\vdots$\\
$x_{n-4}$ & $v_{k+3}$ & $k+1$ & $k-1$\\
$x_{n-3}$ & $v_2$ & $k$ & $k$\\
$x_{n-2}$ & $v_{k+2}$ & $k+1$ & $k-1$\\
$x_{n-1}$ & $v_1$ & $k$ & $k$\\
\hline
$x_n$ & $v_{k+1}$ & n/a & n/a\\
\hline
\end{tabular}
\caption{}
\label{Table 8}
\end{table}
{\em Proof of claim:} We let the reader verify that the radio condition holds for all vertices $x_i, x_j \in V(S^1_{2k+1,k+1})$.  In this case, the diameter of $S^1_{2k+1,k+1}$ is $2k-1$ so for every $i,j$ with $j>i$, $d(x_i, x_j)+c(x_j)-c(x_i) \geq 2k$ must hold.\\

Letting $c(x_1)=1$, the largest number in the range of the radio labeling $c$ is then $c(x_n)$ and is therefore equal to the sum of the entries in the last column of Table \ref{Table 8} plus one.  We let the reader verify that $\rn(S^1_{2k+1,k+1}) \leq 2k^2+1$ as desired.\\

{\bf Claim:} $rn(S^1_{2k+1,k+1}) \geq 2k^2+1$.\\
\indent {\em Proof of claim:} We find a lower bound for $rn(S^1_{2k+1,k+1})$ by using Lemma \ref{lemma:distance} and determining $\max_p \sum_{i=1}^{n-1} d(x_i,x_{i+1})$.  For $1\leq i \leq 2k-2$ let $e_i$ be the edge between $v_i$ and $v_{i+1}$. Let $e_{2k}$ and $e_{2k+1}$ be the two edges incident to $v_{2k+1}$. We will use the terminology established in Lemma \ref{lem:distances}. Using the third conclusion of that lemma, it follows that

\begin{center}

$$N(e_i) \leq \begin{cases}

2i & \text{if } i\leq k-1, \\

2(2k-i)& \text{if }  k+1\leq i\leq 2k-1.  \\

\end{cases}
$$
\end{center}

Furthermore note that any path $P_j$ contains at most one of $e_k$, $e_{2k}$ and $e_{2k+1}$. As there are a total of $2k$ paths $P_j$, it follows that $n(e_k)+n(e_{2k})+n(e_{2k+1})\leq 2k$. Therefore $\max_p \sum_{i=1}^{n-1} d(x_i,x_{i+1}) \leq \sum_{i=1}^{2k+1} N(e_i)= 2k^2$, and Lemma \ref{lemma:distance} shows that $rn(S^1_{2k+1,k+1}) \geq 4k^2 + 1 -2k^2 = 2k^2 + 1$ as desired.

{\bf Case 2:} $S^{1,2}_{2k+1,k+1}$. 
Note that $S^1_{2k+1,k+1}$ results from removing an edge from $S^{1,2}_{2k+1,k+1}$ (and the graphs have the same diameter), so by Remark \ref{rmk:edge removal} and Case 1, $rn(S^1_{2k+1,k+1}) = 2k^2+1 \leq rn(S^{1,2}_{2k+1,k+1})$.  We leave it to the reader to verify that the same labeling in Table \ref{Table 8} is valid.

{\bf Case 3:} $S^2_{2k+1,k+1}$.
Notice that $S_{2k+1,k+1} = S_{2k+1,k}$ by symmetry.  Then since $S_{2k+1,k+1}$ results from removing an edge from $S^2_{2k+1,k+1}$ (and the graphs have the same diameter), we have by Remark \ref{rmk:edge removal}, Theorem \ref{thm:upper}, and Theorem \ref{thm:lower} that $rn(S_{2k+1,k+1})=rn(S_{2k+1,k}) = 2k^2 \leq rn(S^2_{2k+1,k+1})$.  We use the labeling of Table \ref{Table 8} making the change that $c(x_2) - c(x_1) = 2k-2$ since now $d(x_1,x_2) = 2$ to conclude that $rn(S^2_{2k+1,k+1}) \leq 2k^2$.

{\bf Case 4:} $S^{1,2}_{2k,k+1}$.
We first prove that $2k^2-2k+2$ is an upper bound for $rn(S^{1,2}_{2k,k+1})$.  Order the vertices of $S^{1,2}_{2k,k+1}$ into three groups as follows:
\begin{center}
Group I: $v_k, v_{2k}, v_{2k-1}$,\\
Group II: $v_1, v_{k+1}, v_2, v_{k+2}, ..., v_{k-2}, v_{2k-2}$,\\
Group III: $v_{k-1}$.
\end{center}

Now, rename the vertices of $S^{1,2}_{2k,k+1}$ in the above ordering by $x_1, x_2, \ldots, x_n$.  This is the label order of the vertices of $S^{1,2}_{2k,k+1}$.\\

{\bf Claim:} The function $c$ defined in Table \ref{Table 9} is a radio labeling on $S^{1,2}_{2k,k+1}$.\\

\begin{table}
\centering
\begin{tabular}{|c|c|c|c|}
\hline
$x_i$ & Vertex Names & $d(x_i, x_{i+1})$ & $c(x_{i+1})-c(x_i)$\\
\hline
$x_1$ & $v_k$ & $1$ & $2k-2$ \\
$x_2$ & $v_{2k}$ & $k-1$ & $k$\\
$x_3$ & $v_{2k-1}$ & $2k-2$ & $1$\\
\hline
$x_4$ & $v_1$ & $k$ & $k-1$\\
$x_5$ & $v_{k+1}$ & $k-1$ & $k$\\
$x_6$ & $v_2$ & $k$ & $k-1$\\
$x_7$ & $v_{k+2}$ & $k-1$ & $k$\\
$\vdots$ & $\vdots$& $\vdots$ & $\vdots$\\
$x_{n-4}$ & $v_{k-3}$ & $k$ & $k-1$\\
$x_{n-3}$ & $v_{2k-3}$ & $k-1$ & $k$\\
$x_{n-2}$ & $v_{k-2}$ & $k$ & $k-1$\\
$x_{n-1}$ & $v_{2k-2}$ & $k-1$ & $k$\\
\hline
$x_n$ & $v_{k-1}$ & n/a & n/a\\
\hline
\end{tabular}
\caption{}
\label{Table 9}
\end{table}
{\em Proof of claim:} We let the reader verify that the radio condition holds for all vertices $x_i, x_j \in V(S^{1,2}_{2k,k+1})$.  In this case, the diameter of $S^{1,2}_{2k,k+1}$ is $2k-2$ so for every $i,j$ with $j>i$, $d(x_i, x_j)+c(x_j)-c(x_i) \geq 2k-1$ must hold.\\

Letting $c(x_1)=1$, the largest number in the range of the radio labeling $c$ is then $c(x_n)$ and is therefore equal to the sum of the entries in the last column of Table \ref{Table 9} plus one.  We let the reader verify that $\rn(S^{1,2}_{2k,k+1}) \leq 2k^2-2k+2$ as desired.\\

{\bf Claim:} $rn(S^{1,2}_{2k,k+1}) \geq 2k^2-2k+2$.\\
\indent {\em Proof of claim:} We find a lower bound for $rn(S^{1,2}_{2k,k+1})$ by using Lemma \ref{lemma:distance} and determining $\max_p \sum_{i=1}^{n-1} d(x_i,x_{i+1})$.  For $1\leq i \leq 2k-2$ let $e_i$ be the edge between $v_i$ and $v_{i+1}$. Let $e_{2k-1}$, $e_{2k}$ and $e_{2k+1}$ be the three edges incident to $v_{2k}$ where $e_{2k-1}$ is incident to $v_{k-1}$, $e_{2k}$ is incident to $v_{k}$ and $e_{2k+1}$ is incident to $v_{k+1}$ (see Figure \ref{fig:SpireGraphS_1,2}). By the third conclusion of Lemma \ref{lem:distances} it follows that
\begin{center}

$$N(e_i) \leq  \begin{cases}

2i & \text{if } 1\leq i\leq k-2, \\

2(2k-1-i)& \text{if }  k+1\leq i\leq 2k-2.  \\

\end{cases}
$$
\end{center}

Furthermore by Lemma \ref{lem:multiple} it follows that $N(e_{k-1})+N(e_{2k-1})\leq 2(k-1)$ and $N(e_{k})+N(e_{2k+1})\leq 2(k-1)$. Finally $n(e_{2k})\leq 1$ as it is only contained in a path with endpoints $v_k$ and $v_{2k}$. Note that if all three of these inequalities are equalities, then $v_k$ and $v_{2k}$ correspond to $x_1$ and $x_{2k}$ by the first conclusion of Lemma \ref{lem:distances} as these are the only vertices for which the sum of the $n(e_i)$ for the incident edges may be odd. At the same time $v_k$ and $v_{2k}$ must correspond to $x_i$ and $x_{i+1}$ for some $i$ as $n(e_{2k})= 1$. This is a contradiction. Therefore $n(e_{k-1})+n(e_{k})+n(e_{2k-1})+n(e_{2k})+n(e_{2k+1})\leq 4(k-1)$.  Thus $\max_p \sum_{i=1}^{n-1} d(x_i,x_{i+1}) \leq 2k^2-2k$, and Lemma \ref{lemma:distance} shows that $rn(S^{1,2}_{2k,k+1}) \geq 4k^2 -4k +2 -2k^2 +2k = 2k^2 -2k+ 2$.

{\bf Case 5:} $S^2_{2k,k+1}$.
We use the labeling of Table \ref{Table 9} making the change that $c(x_2) - c(x_1) = 2k-3$ since now $d(x_1,x_2) = 2$ to conclude that $rn(S^2_{2k,k+1}) \leq 2k^2-2k+1$.
\indent  For $1\leq i \leq 2k-2$ let $e_i$ be the edge between $v_i$ and $v_{i+1}$. Let $e_{2k-1}$ and $e_{2k}$ be the  edges incident to $v_{2k}$ where $e_{2k-1}$ is incident to $v_{k-1}$, and $e_{2k}$ is incident to $v_{k+1}$. As in the previous case it follows that
\begin{center}

$$N(e_i) \leq  \begin{cases}

2i & \text{if } i\leq k-2, \\

2(2k-1-i)& \text{if }  k+1\leq i\leq 2k-2.  \\

\end{cases}
$$
\end{center}

Unlike in the previous case, here exactly one path may contain $e_{k-1}$ and $e_{2k-1}$ or it may contain $e_{k}$ and $e_{2k}$. This would be the path (if such a path exists) with endpoints $v_k$ and $v_{2k}$. Without loss of generality we can assume that this path contains $e_{k-1}$ and $e_{2k-1}$. Therefore in this case $n(e_{k-1})+n(e_{2k-1})\leq 2(k-1)+1$ and $n(e_{k})+n(e_{2k})\leq 2(k-1)$. Thus  $\max_p \sum_{i=1}^{n-1} d(x_i,x_{i+1}) \leq 2k^2-2k+1$, and Lemma \ref{lemma:distance} shows that $rn(S^{1,2}_{2k,k+1}) \geq 4k^2 -4k +2 -2k^2 +2k-1 = 2k^2 -2k+ 1$.

\end{proof}

\begin{center}{\bf Appendix:\\ Labelings of Graphs of order $n=2k$ with $k \leq 7$ and diameter $n-2$}\end{center}

The figures below give upper bounds for the radio number of spire graphs with $k\leq 7$ and $n=2k$ since these particular cases were not covered in Theorem \ref{thm:upper}.  These upper bounds match the lower bounds for these graphs found in Theorem \ref{thm:lower} to show that these bounds are the actual radio number of the graphs.

\begin{figure}[h!]
\begin{center}
\includegraphics[scale=.5]{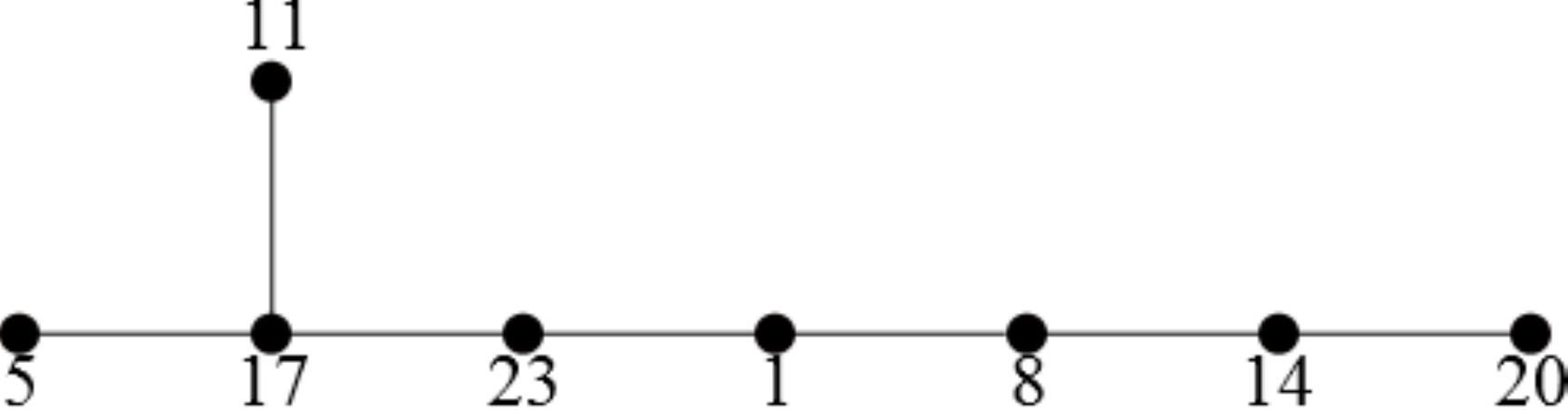}
\caption{$rn(S_{8,2})\leq 23$}
\label{fig:$S_{8,2}$}
\end{center}
\end{figure}

\begin{figure}[h!]
\begin{center}
\includegraphics[scale=.5]{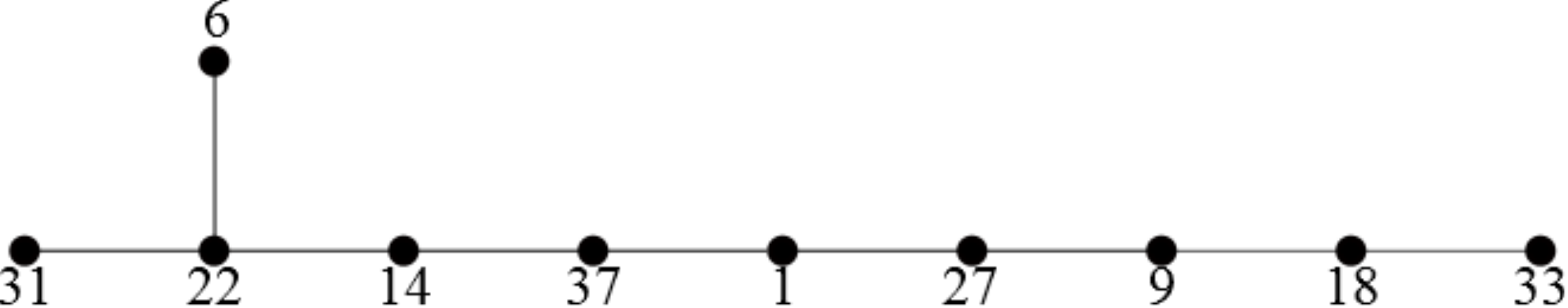}
\caption{$rn(S_{10,2}) \leq 37$}
\label{fig:$S_{10,2}$}
\end{center}
\end{figure}

\begin{figure}[h!]
\begin{center}
\includegraphics[scale=.5]{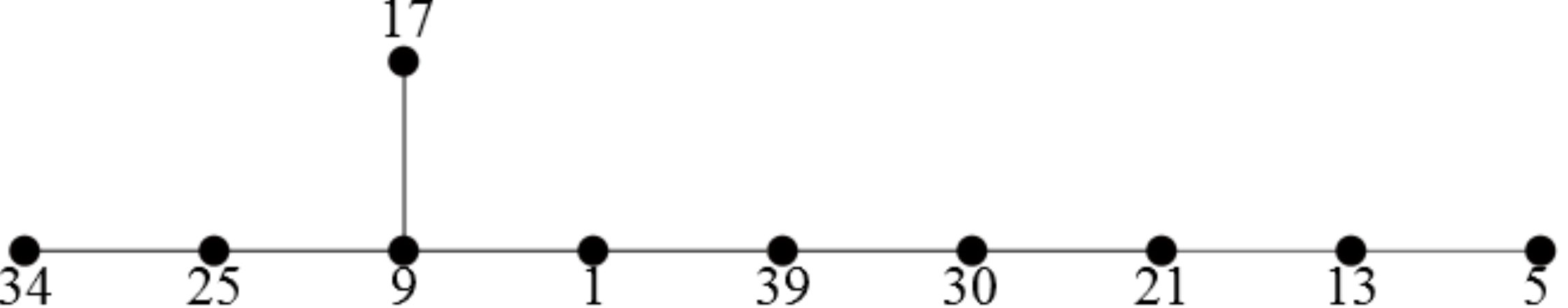}
\caption{$rn(S_{10,3}) \leq 39$}
\label{fig:$S_{10,3}$}
\end{center}
\end{figure}

\begin{figure}[h!]
\begin{center}
\includegraphics[scale=.5]{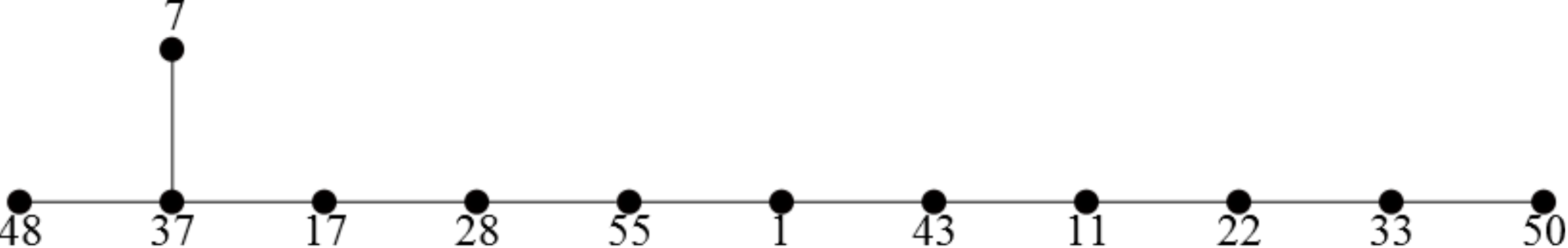}
\caption{$rn(S_{12,2}) \leq 55$}
\label{fig:$S_{12,2}$}
\end{center}
\end{figure}

\begin{figure}[h!]
\begin{center}
\includegraphics[scale=.5]{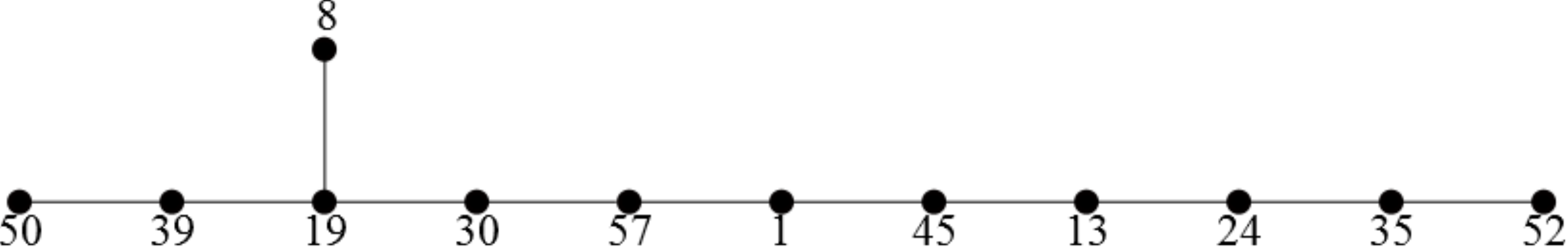}
\caption{$rn(S_{12,3}) \leq 57$}
\label{fig:$S_{12,3}$}
\end{center}
\end{figure}

\begin{figure}[h!]
\begin{center}
\includegraphics[scale=.5]{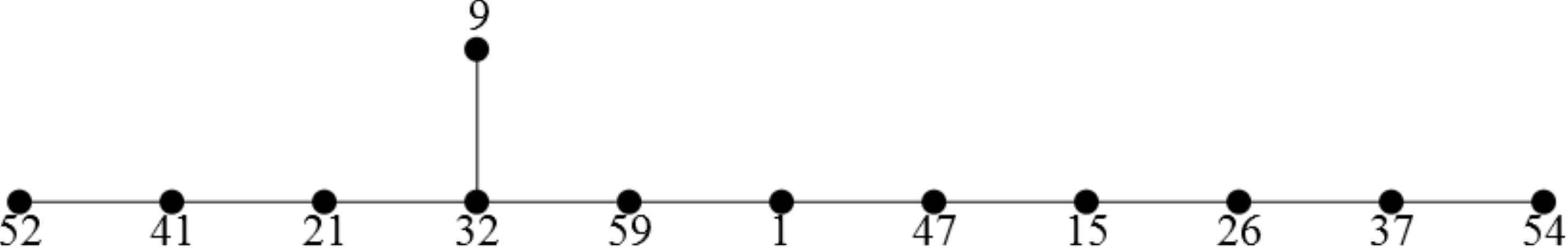}
\caption{$rn(S_{12,4}) \leq 59$}
\label{fig:$S_{12,4}$}
\end{center}
\end{figure}

\newpage

\begin{center}{\bf Bibliography}\end{center}


\begin{thebibliography}{00}
\bibitem{Chartrand}
Gary Chartrand and Ping Zhang.
\newblock Radio colorings of graphs---a survey.
\newblock {\em Int. J. Comput. Appl. Math.}, 2(3):237--252, 2007.

\bibitem{Georges}
John~P. Georges, David~W. Mauro, and Marshall~A. Whittlesey.
\newblock Relating path coverings to vertex labellings with a condition at
  distance two.
\newblock {\em Discrete Math.}, 135(1-3):103--111, 1994.

\bibitem{Griggs}
Griggs and Yeh.
\newblock Labeling graphs with a condition at distance two.
\newblock {\em SIAM J. Discrete Math}, 5:585 -- 595, 1992.

\bibitem{Hale}
W.K. Hale.
\newblock Frequency assignment: Theory and applications.
\newblock {\em Proceedings of the IEEE}, 68(12):1497 -- 1514, dec. 1980.

\bibitem{trees}
Daphne Der-Fen Liu.
\newblock Radio number for trees.
\newblock {\em Discrete Math.}, 308(7):1153--1164, 2008.

\bibitem{paths}
Daphne Der-Fen Liu and Xuding Zhu.
\newblock Multilevel distance labelings for paths and cycles.
\newblock {\em SIAM J. Discrete Math.}, 19(3):610--621 (electronic), 2005.

\bibitem{Yeh}
R.~K. Yeh.
\newblock A survey on labeling graphs with a condition at distance two.
\newblock {\em Discrete Math}, 396:1217--1231, 2006.
\end{thebibliography}

\end{document}